\newtheorem{thm}{Theorem}
\newtheorem{prop}{Proposition}
\newtheorem{defn}{Definition}
\newtheorem{remark}{Remark}
\begin{document}

\title[Tied pseudo links \& Pseudo knotoids]
  {Tied pseudo links \& Pseudo knotoids}

\author{Ioannis Diamantis}
\address{China Agricultural University,
International College Beijing, No.17 Qinghua East Road, Haidian District,
Beijing, {100083}, P. R. China.}
\email{ioannis.diamantis@hotmail.com}

\keywords{tied links, knotoids, pseudoknots, pseudo knotoids, tied knotoids, tied pseudo knotoids, pesudolinks, pseudo braid monoid, tied pseudo braid monoid.}

\subjclass[2010]{57M27, 57M25, 20F36, 20F38, 20C08}

\setcounter{section}{-1}

\date{}

\begin{abstract}
In this paper we study the theory of {\it pseudo knots}, which are knots with some missing crossing information, and we introduce and study the theory of {\it pseudo tied links} and the theory of {\it pseudo knotoids}. In particular, we first present a braiding algorithm for pseudo knots and we then introduce the $L$-moves in that setting, with the use of which we formulate a sharpened version of the analogue of the Markov theorem for pseudo braids. Then we introduce and study the theory of {\it tied pseudo links}, that generalize the notion of tied links, and we exploit the relation between tied pseudo links and tied singular links. We first present an $L$-move braid equivalence for tied singular braids. Then, we introduce the tied pseudo braid monoid and we formulate and prove analogues of the Alexander and Markov theorems for tied pseudo links. Finally, we introduce and study the theory of {\it pseudo knotoids}, that generalize the notion of knotoids. We present an isotopy theorem for pseudo knotoids and we then pass to the level of braidoids. We further introduce and study the {\it pseudo braidoids} by introducing the {\it pseudo} $L${\it -moves} and by presenting the analogues of the Alexander and Markov theorems for pseudo knotoids. We also discuss further research related to {\it tied (multi)-knotoids and tied pseudo (multi)-knotoids}. The theory of pseudo knots may serve as a strong tool in the study of DNA, while tied links have potential use in other aspects of molecular biology.
\end{abstract}

\maketitle

\section{Introduction}\label{intro}

Pseudo diagrams of knots, links and spatial graphs were introduce by Hanaki in \cite{H} as projections on the 2-sphere with over/under information at some of the double points. They comprise a relatively new and important model for DNA knots, since there exist cases of DNA knots that, after studying them by electron microscopes, it is hard to say a positive from a negative crossing. By considering equivalence classes of pseudo diagrams under equivalence relations generated by a specific set of Reidemeister moves (see Figure~\ref{reid}), one obtains the theory of {\it pseudo knots}, that is, standard knots whose projections contain crossings with missing information (see \cite{HJMR}). In \cite{BJW}, the pseudo braid monoid is introduced, which is related to the singular braid monoid, and with the use of which, the authors present the analogues of the Alexander and Markov theorems for pseudo knots (see Theorems~\ref{alexpl} \& \ref{markpl} in this paper). In this paper we introduce the $L$-moves for pseudo braids and we obtain a sharpened version of the analogue of the Markov theorem for pseudo braids. Moreover, we introduce and study the notion of {\it tied pseudo links} and {\it tied pseudo braids}, that is, pseudo links/braids equipped with {\it ties}, which are non-embedded arcs joining some components of the pseudo link. 

\smallbreak

Tied links were introduced in \cite{AJ1} as a generalization of links in $S^3$ forming a new class of knotted objects. They are classical links equipped with {\it ties} and they are obtained by considering the closure of tied braids, which appear from a diagrammatic interpretation of the defining generators of the {\it algebra of braids and ties} introduced and studied in \cite{Ju, AJ2}. The diagrammatic theory of tied links have potential usefulness in other aspects of molecular biology. Of particular interest are the {\it tied singular links}, introduced and studied in \cite{AJ3}, that is, singular links equipped with ties. They constitute a generalization of singular links in $S^3$ in the same way that tied links generalize standard links. For that reason, we present an $L$-move braid equivalence for tied singular braids (see Theorem~\ref{lmarktsl}) and we show how tied singular links are related to tied pseudo links. We then define the {\it tied pseudo braid monoid}, which is related to the {\it tied singular braid monoid} (\cite{AJ3}) and which is fundamental for this paper. The tied pseudo braid monoid is the basis toward formulating the analogues of the Alexander and Markov theorems for tied pseudo links, that we then state and prove (see Theorems~\ref{alexthmtpl}, \ref{marktpb} \& \ref{marktpb2}).
 
\smallbreak

We finally extend the notion of knotoids, introduced in \cite{T} as open knotted curves in oriented surfaces, to that of {\it pseudo knotoids}, i.e. knotoids with some missing crossing information. We present an isotopy theorem for pseudo knotoids and we then study the theory of {\it pseudo braidoids}. Braidoids were introduced in \cite{GL2} as the counterpart analogue of knotoids, and the authors present a braiding algorithm, as well as braidoid equivalence moves (see also \cite{GL1}). In this paper we present the analogue of the Alexander theorem for pseudo knotoids (see Theorem~\ref{alexpknoid}) and we then introduce $L$-moves on pseudo braidoids, with the use of which, we state and prove the analogue of the Markov theorem for pseudo braidoids (see Theorem~\ref{breqpbroid}). Furthermore, we point out the diagrammatic relation between pseudo knotoids and the recent theory of singular knotoids (see \cite{MLK}). Finally, it is worth mentioning that the theory of knotoids has been used to classify entanglement in proteins, which are long chains of amino acids that sometimes form open ended knots \cite{DGBS, GDBS, GGLDSK}.

\smallbreak

The paper is organized as follows: \S~\ref{sectl} we recall results concerning tied links and tied braids from \cite{AJ1}, in \S~\ref{pknot} we recall results on pseudo knots and singular knots, that play an important role for the rest of the paper, and in \S~\ref{knbroid} we recall all necessary results for knotoids and braidoids from \cite{T} and \cite{GL1, GL2}. In \S~\ref{lpb} we introduce $L$-moves for pseudo braids and in \S~\ref{alse} we present a braiding algorithm for pseudo knots. In \S~\ref{lmpb} we prove a sharpened version of the analogue of the Markov theorem for pseudo braids in $S^3$. We then proceed by introducing the family of tied pseudo links (see \S~\ref{tpl}) and the tied pseudo braid monoid. In \S~\ref{tpbal} we formulate and prove the analogue of the Alexander theorem for tied pseudo links and in \S~\ref{mthmtpb} we present the analogue of the Markov theorem for tied pseudo braids. The last part of this paper concerns the theory of pseudo knotoids, introduced in in \S~\ref{pkoidnew}. In \S~\ref{pbrd} we introduce the notion of pseudo braidoids and we adopt the braiding algorithm of \S~\ref{tpbal} for the case of pseudo knotoids. We also introduce the notion of pseudo $L$-moves, with the use of which, we formulate the analogue of the Markov theorem for pseudo knotoids. Finally, in \S~\ref{fres} we introduce the notions of tied knotoids and tied pseudo knotoids, which are the topics of further research.

\section{Preliminaries}\label{prel}

\subsection{Tied links \& tied braids}\label{sectl}

In this subsection we introduce tied links and we review the tied braid monoid introduced in \cite{AJ1}. Roughly speaking, a tied link is a link whose set of components is subdivided into classes. More precisely:

\begin{defn}\label{tls3}\rm
A {\it (oriented) tied link} $L(P)$ on $n$ components is a set $L$ of $n$ disjoint smooth (oriented) closed curves embedded in $S^3$, and a set $P$ of ties, i.e., unordered pairs of points $(p_r, p_s)$ of such curves between which there is an arc called a {\it tie}. If $P=\emptyset$, then $L$ is a classical link in $S^3$. See Figure~\ref{tkn1} for an example.
\end{defn}

Note that ties are depicted as red \color{red} springs \color{black}or \color{red}line segments \color{black}connecting pairs of points lying on the curves and that ties are {\bf not} embedded arcs, i.e. arcs can cross through ties. The set of ties on a tied link defines a partition of the set of components of the link by considering components of the tied link to belong to the same class, if there is a tie connecting them. Moreover, a {\it tied link diagram} is defined as a diagram of a tied link.

\begin{figure}[ht]
\begin{center}
\includegraphics[width=2.4in]{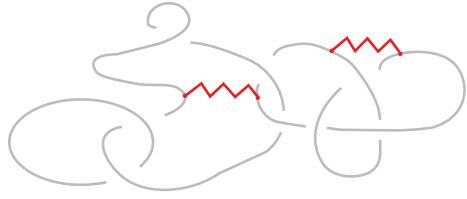}
\end{center}
\caption{A tied link.}
\label{tkn1}
\end{figure}

{\it Tie isotopy}, i.e. isotopy between tied links, is defined as ambient isotopy between links (ignoring the ties), taking also into consideration that the set of ties in the links define the same partition of the set of components. More precisely:

\begin{defn}\label{istls3}\rm
Two (oriented) tied links $L(P)$ and $L^{\prime}(P^{\prime})$ are {\it tie isotopic} if:
\smallbreak
\begin{itemize}
\item the links $L$ and $L^{\prime}$ in $S^3$ are ambient isotopic and
\smallbreak
\item the sets $P$ and $P^{\prime}$ define the same partition of the set of components of $L$ and $L^{\prime}$.
\end{itemize}
\end{defn}

The partition defined on the link components by the set of ties induces a tie-equivalence relation, which implies that a tie may be introduced or deleted at will between any two points of the same component. Also, an extra tie may be introduced between two components that are already connected by a tie. Finally, a tie may be introduced between two components which are both tied with a third one.

\smallbreak

{\it Tied braids}, that is, standard braids in $S^3$ equipped with ties and whose standard closure gives rise to tied links, were also introduced in \cite{AJ1}. They generalize the notion of standard braids and via the {\it monoid of tied braids}, i.e. the analogue of the braid group for tied links, one may extend the Alexander and Markov theorems for oriented tied links. In particular:

\begin{defn}\label{montls3}\rm
The tied braid monoid $TM_{n}$ is the monoid generated by $\sigma_1, \ldots, \sigma_{n-1}$, the usual braid generators of $B_{n}$, and the generators $\eta_1, \ldots, \eta_{n-1}$, called ties (see Figure~\ref{gs3}), satisfying the braid relations of $B_{n}$ together with the following relations:

\[
\begin{array}{lcl}
\eta_i \eta_j & = & \eta_j \eta_i, \qquad {\rm for\ all}\ i, j\\
&&\\
\eta_i \sigma_i & = & \sigma_i \eta_i, \qquad {\rm for\ all}\ i\\
&&\\
\eta_i \sigma_j & = & \sigma_j \eta_i, \qquad {\rm for\ all}\ |i-j|>1\\
&&\\
\eta_i \sigma_j \sigma_i^{\epsilon} & = & \sigma_j \sigma_i^{\epsilon} \eta_j, \quad {\rm for}\ |i-j|=1\ {\rm and\ where}\ \epsilon\, =\, \pm 1\\
&&\\
\eta_i \eta_j \sigma_i & = & \eta_j \sigma_i \eta_j\ =\ \sigma_i \eta_i \eta_j, \quad {\rm for}\ |i-j|=1\\
&&\\
\eta_i^2 & = & \eta_i, \qquad \ \ {\rm for\ all}\ i\\
\end{array}
\]
\end{defn}

\begin{figure}[ht]
\begin{center}
\includegraphics[width=3.2in]{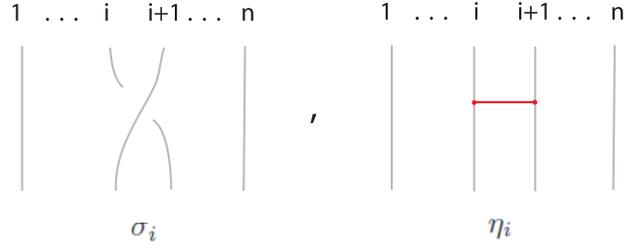}
\end{center}
\caption{Generators of $TM_n$.}
\label{gs3}
\end{figure}

\begin{figure}[ht]
\begin{center}
\includegraphics[width=3.7in]{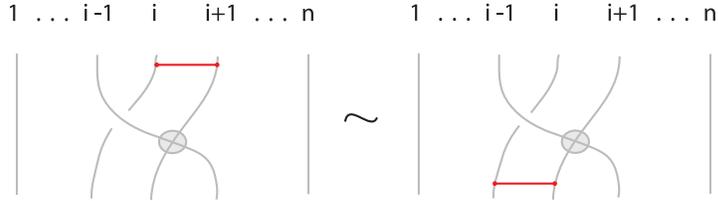}
\end{center}
\caption{Relations in $TM_{n}$, where the shaded crossing is either positive or negative.}
\label{relt1}
\end{figure}

\begin{figure}[ht]
\begin{center}
\includegraphics[width=3.3in]{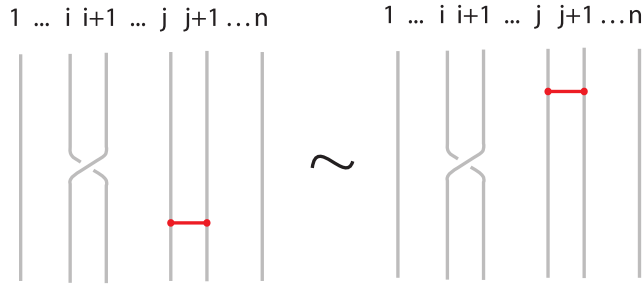}
\end{center}
\caption{The relations $\sigma_i\, \eta_{j}\, =\, \eta_{j}\, \sigma_i$.}
\label{relt2}
\end{figure}

\begin{figure}[ht]
\begin{center}
\includegraphics[width=3.3in]{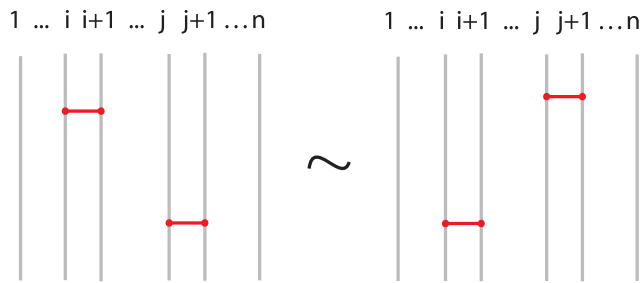}
\end{center}
\caption{The relations $\eta_i\, \eta_{j}\, =\, \eta_{j}\, \eta_i$.}
\label{relt3}
\end{figure}

\noindent Note that in \cite{AJ1} the tied braid monoid is denoted as $TB_n$. Moreover, in \cite{AJ1} the authors also introduce the {\it generalized ties} $\eta_{i, j}$ and present their properties, with the use of which, one obtains the {\it mobility property} for tied links (see Proposition~\ref{propaj}i. below), a useful property that allows us to prove the analogue of the Markov theorem for tied links. More precisely:

\begin{defn}\rm
The {\it generalized tie} $\eta_{i, j}$, joining the $i^{th}$ with the $j^{th}$ strand is defined as:

\begin{equation}\label{genties}
\eta_{i, j}\ :=\ \sigma_i\, \cdots\, \sigma_{j-2}\, \eta_{j-1}\, \sigma_{j-2}^{-1}\, \cdots\, \sigma_i^{-1}
\end{equation}

Clearly, $\eta_{i, i+1}=\eta_i$. Set now $\eta_{i, i}:=1$ and define also the {\it length} of a generalized tie $\eta_{i, j}$ as $l(\eta_{i, j})=|i-j|$, where $l(\eta_i)=1$.
\end{defn}

\smallbreak

Generalized ties $\eta_{i, j}$ are {\it transparent} with respect to all strands between the $i^{th}$ and $j^{th}$ strands, that is, they can be drawn no matter if in front or behind these strands. Moreover, the generalized ties are provided with {\it elasticity}, that is, each generalized tie can be transformed by a second Reidemeister move in which the tie is stretched and represented as a spring. Finally, the generalized ties satisfy the following relations (\cite{AJ1}):

\begin{equation}\label{eqf}
\begin{array}{crclcrcl}
(i) & \sigma_i\, \eta_{i, j} & = & \eta_{i+1}\, \sigma_i, & (ii) & \sigma_j\, \eta_{i, j} & = & \eta_{i, j+1}\, \sigma_i,\\
&&&&&&&\\
(iii) & \sigma_{i-1}\, \eta_{i, j} & = & \eta_{i-1, j}\, \sigma_{i-1}, & (iv) & \sigma_{j-1}\, \eta_{i, j} & = & \eta_{i, j-1}\, \sigma_{j-1}, \\
&&&&&&&\\
(v) & \eta_{i, k}\, \eta_{k, m} & = & \eta_{i, k}\, \eta_{i, m} & = & \eta_{k, m}\, \eta_{i, m},& & 1\leq i, k, m\leq n\\
\end{array}
\end{equation}

\noindent The relations in Definition~\ref{montls3}, Equations~\ref{eqf} and the fact that the generalized ties are transparent and equipped with elasticity, imply that in a tied braid all ties may slide to the bottom part of the braid (\cite{AJ1}). In particular:

\begin{prop} \label{propaj}
\begin{itemize}
\item[i.] Mobility property: In any tied braid all ties can be moved to the bottom (or to the top), that is, $\alpha\, \sim\, \beta\, \gamma$, where $\alpha\in TM_n, \beta \in B_n$ and $\gamma$ the set of generalized ties.
\smallbreak
\item[ii.] Any set of generalized ties in $TM_n$ defines an equivalence relation on the set of $n$ strands.
\smallbreak
\item[iii.] Let $\alpha_1=\beta_1\, \gamma_1,\ \alpha_2=\beta_2\, \gamma_2$ be two tied braids in $TM_n$. Then, $\alpha_1\, \sim\, \alpha_2$, if and only if $\beta_1\, \sim\, \beta_2$ in $B_n$ and $\gamma_1, \gamma_2$ define the same partition of the set of the strands. 
\end{itemize}
\end{prop}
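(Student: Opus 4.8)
The plan is to prove the three parts in order, bootstrapping the mobility property (i) from the relations in Definition~\ref{montls3} together with the generalized-tie relations in Equations~\ref{eqf}, and then deducing (ii) and (iii) as essentially algebraic consequences. For part (i), I would start with an arbitrary word $\alpha \in TM_n$, written as an alternating product of braid generators $\sigma_k^{\pm 1}$ and ties $\eta_k$. The goal is to push every tie-letter past all the braid-letters to its right, collecting them at the bottom. The key move is the \emph{commutation of a tie through a braid generator}: when the indices are far apart ($|i-j|>1$) the tie commutes directly by the third relation of Definition~\ref{montls3}; when the indices interact ($|i-j|=1$) I would invoke the relation $\eta_i \sigma_j \sigma_i^\epsilon = \sigma_j \sigma_i^\epsilon \eta_j$, which moves a tie past a pair of adjacent crossings at the cost of relabelling its index. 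The point of promoting ordinary ties $\eta_i$ to generalized ties $\eta_{i,j}$ is precisely that the single relabelling relation is replaced by the cleaner system (i)--(iv) of Equations~\ref{eqf}, which says that sliding a generalized tie past one crossing simply shifts one of its two endpoints by one strand. Using transparency (a generalized tie may pass in front of or behind the intermediate strands) and elasticity (it may be stretched by a Reidemeister~II move), each such slide is an honest isotopy, so after finitely many slides every tie reaches the bottom. I would make this a formal induction on the number of braid-letters lying below the lowest tie.

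Once the braid-part $\beta$ and the tie-part $\gamma$ have been separated as $\alpha \sim \beta\gamma$ with $\beta \in B_n$ and $\gamma$ a product of generalized ties, part~(ii) is the statement that such a $\gamma$ encodes an equivalence relation on $\{1,\dots,n\}$. Here I would read relation (v) of Equations~\ref{eqf}, namely $\eta_{i,k}\eta_{k,m} = \eta_{i,k}\eta_{i,m} = \eta_{k,m}\eta_{i,m}$, as the transitivity axiom: if strand $i$ is tied to $k$ and $k$ is tied to $m$, then $i$ is tied to $m$. Combined with $\eta_{i,i}=1$ (reflexivity) and the symmetry of $\eta_{i,j}$ in its indices (since a tie is an unordered pair), the generalized ties in $\gamma$ generate exactly an equivalence relation, i.e.\ a partition of the strand set. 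The content is that relation (v) guarantees this partition is well defined independently of the order in which the ties were written, which again follows from the commutativity relation $\eta_i\eta_j=\eta_j\eta_i$ lifted to generalized ties.

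For part~(iii) I would argue both directions. The ``if'' direction is immediate: if $\beta_1 \sim \beta_2$ in $B_n$ and $\gamma_1,\gamma_2$ define the same partition, then $\beta_1\gamma_1 \sim \beta_2\gamma_2$ since the tie-parts are interchangeable by the tie-equivalence relation described after Definition~\ref{istls3} (ties within a class, or redundant extra ties, may be added or deleted at will). For the ``only if'' direction, given $\alpha_1 \sim \alpha_2$ I would map both tied braids to $B_n$ by the monoid homomorphism that forgets all ties (sending each $\eta_k \mapsto 1$); this sends $\beta_i\gamma_i \mapsto \beta_i$, so $\beta_1 \sim \beta_2$ in $B_n$. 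Separately, the partition is a tie-isotopy invariant by the very definition of tie isotopy (Definition~\ref{istls3}), so $\gamma_1$ and $\gamma_2$ must induce the same partition. I expect the main obstacle to be the bookkeeping in part~(i): one must verify carefully that \emph{every} local configuration of a tie meeting a crossing can be resolved by one of the relations (i)--(iv), including the cases where the tie-endpoints straddle the crossing on either side, and that the stretching/transparency steps are genuine isotopies rather than new relations being silently assumed. Making the induction hypothesis precise enough to handle these boundary cases uniformly is the delicate point; the rest is a routine consequence of the relations already recorded in the excerpt.
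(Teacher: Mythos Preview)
The paper does not actually prove Proposition~\ref{propaj}; it is stated with attribution to \cite{AJ1}, and the paragraph immediately preceding it gives only the one-sentence sketch that the relations of Definition~\ref{montls3}, Equations~\ref{eqf}, and the transparency and elasticity of generalized ties ``imply that in a tied braid all ties may slide to the bottom part of the braid.'' Your proposal for part~(i) is precisely a fleshing-out of that sketch, and your arguments for~(ii) and~(iii) are the natural algebraic consequences, so you are fully aligned with what the paper intends.

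One small correction in part~(iii): in the ``only if'' direction you appeal to Definition~\ref{istls3} to say the partition is an invariant, but that definition concerns tie isotopy of closed tied \emph{links}, whereas the relation $\sim$ in the proposition is equality in the monoid $TM_n$. The correct justification is to check directly that each defining relation of $TM_n$ preserves the partition of $\{1,\dots,n\}$ induced by the ties (which is immediate: every relation either commutes ties past crossings, relabelling endpoints consistently with the strand permutation, or rewrites ties using idempotence/transitivity, neither of which changes the partition). This is a misplaced citation rather than a genuine gap; the forgetful homomorphism $TM_n \to B_n$ sending $\eta_k \mapsto 1$ that you use for the braid part is well defined and does the job.
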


By considering now the obvious inclusion $TM_n \subset TM_{n+1}$, we can consider the inductive limit $TM_{\infty}$. Using the generalized ties and the fact that ties can be transferred freely, in \cite{AJ1} it is shown that the braiding algorithm in \cite{LR1} can also be applied for tied links. In particular:

\begin{thm}[{\bf The analogue of the Alexander theorem for tied links in} $S^3$] \label{alextls3}
Every oriented tied link is isotopic to the closure of a tied braid.
\end{thm}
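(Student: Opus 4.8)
The plan is to adapt the braiding algorithm of \cite{LR1} to the tied setting, treating the ties as passive, flexible markers that are carried along while the underlying link is braided. First I would take a diagram of the oriented tied link $L(P)$ and temporarily forget the ties, keeping only the underlying oriented link $L$. Applying the Lambropoulou--Rourke algorithm to $L$ produces a braid $\beta$ whose closure $\widehat{\beta}$ is isotopic to $L$; recall that this algorithm works by fixing a braiding axis and orientation, declaring the arcs that run along the preferred direction to be \emph{up-arcs}, and eliminating each \emph{down-arc} by cutting it and rerouting (threading) its two ends around the axis so that it becomes a concatenation of up-arcs.

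The next step is to account for the ties during this process. Here I would crucially use the three structural features recalled before Proposition~\ref{propaj}: ties are \emph{not} embedded (arcs may pass through them), generalized ties are \emph{transparent} (they may be drawn in front of or behind any strand lying between their endpoints), and they are \emph{elastic} (stretchable by a Reidemeister~II move and representable as springs). Consequently none of the braiding moves is obstructed by a tie: when an arc is cut and pulled around the axis it may freely pass through any tie in its way, and a tie whose endpoints sit on arcs being repositioned simply stretches to follow them. Thus the algorithm applied to $L$ lifts verbatim to $L(P)$ and yields a \emph{tied} braid $\beta(P')$.

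I would then check that closing $\beta(P')$ recovers the correct partition. Each tie of $P$ joins two points lying on components $C_a$ and $C_b$ of $L$ (possibly $a=b$). Under the braiding algorithm a single component is subdivided into several strands, but upon taking the closure these strands are reconnected, in their original cyclic order, into exactly the components of $L$; hence the endpoints of every tie return to the components $C_a, C_b$ on which they began. Therefore $P'$ induces the same partition of the components of $\widehat{\beta}\cong L$ as $P$ does on $L$, and the closure of $\beta(P')$ is tie isotopic to $L(P)$ in the sense of Definition~\ref{istls3}.

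Finally, to put the output in canonical form I would invoke the mobility property (Proposition~\ref{propaj}i) together with the relations~\eqref{eqf}: all ties in $\beta(P')$ can be slid to the bottom of the braid and rewritten as a product of generalized ties, giving $\beta(P') \sim \beta\,\gamma$ with $\beta \in B_n$ and $\gamma$ a set of generalized ties encoding the partition, and by Proposition~\ref{propaj}iii this normal form depends only on $\beta$ and on the partition. The main obstacle is the middle step: one must argue carefully that cutting and rerouting an arc through the ties never alters which closure-component a tie endpoint belongs to. This is exactly where the non-embeddedness and elasticity of the ties are essential---they guarantee that the tie endpoints are simply dragged along with the points of the diagram and are restored to their original components by the closure, so that the induced partition is an invariant of the entire procedure.
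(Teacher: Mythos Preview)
Your proposal is correct and follows essentially the same approach that the paper attributes to \cite{AJ1}: apply the braiding algorithm of \cite{LR1} to the underlying link and use the transparency, elasticity, and free mobility of the (generalized) ties to carry them along unchanged, so that the resulting tied braid closes to a tie-isotopic link. One minor terminological slip: in the \cite{LR1} convention (and in \S\ref{alse} of this paper) it is the \emph{up-arcs}---the arcs running against the downward braid direction---that are eliminated by the braiding moves, not the down-arcs.
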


In \cite{AJ1} it is also proved the analogue of the Markov theorem for tied braids using the mobility property and Equations~\ref{eqf}. More precisely, we have the following theorem:

\begin{thm}[{\bf The analogue of the Markov Theorem for tied braids}] \label{marktls3}
Two tied braids have tie isotopic closures if and only if one can  obtained from the other by a finite sequence of the following moves:

\[
\begin{array}{llcll}
{Conjugation:} &  \alpha\, \beta & \sim & \beta\, \alpha, & {\rm for\ all}\ \alpha,\, \beta \in TM_n,\\
&&&&\\
{Stabilization:} &  \alpha & \sim & \alpha\, \sigma_n^{\pm 1}, & {\rm for\ all}\ \alpha \in TM_n,\\
&&&&\\
{Ties:} &  \alpha & \sim & \alpha\, \eta_{i, j}, & {\rm for\ all}\ \alpha \in TM_n\ {\rm such\ that}\\
\end{array}
\]
\smallbreak
\noindent $s_{\alpha}(i)=j$, {\rm where} $s_{\alpha}$ {\rm denotes the permutation associated to the braid obtained from} $\alpha$ {\rm by forgetting its ties.}
\end{thm}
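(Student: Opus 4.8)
The plan is to prove the two implications separately, disposing of the soundness (``if'') direction quickly and then concentrating on the completeness (``only if'') direction, where the real work lies.

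For the \emph{if} direction I would check that each of the three moves yields tie isotopic closures. Conjugation and stabilization are the classical Markov moves performed on the underlying braid obtained by forgetting the ties, so by the classical Markov theorem the underlying links $\widehat{\beta}$ are ambient isotopic; it then remains only to verify that each move preserves the partition of the set of components induced by the ties. For conjugation this is immediate, since $\widehat{\alpha\beta}$ and $\widehat{\beta\alpha}$ are the same closed diagram up to isotopy; for stabilization it holds because the single new strand is absorbed into the component already containing strand $n$, leaving the partition of the remaining components unchanged. For the Ties move, the hypothesis $s_\alpha(i)=j$ means exactly that the strand entering at top position $i$ exits at bottom position $j$, so after closing the braid the points at positions $i$ and $j$ lie on one and the same component of $\widehat{\alpha}$; by the tie-equivalence relation recalled after Definition~\ref{istls3}, introducing or deleting a tie between two points of a single component does not change the induced partition, so $\widehat{\alpha}$ and $\widehat{\alpha\,\eta_{i,j}}$ are tie isotopic.

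For the \emph{only if} direction, suppose $\widehat{\alpha_1}$ and $\widehat{\alpha_2}$ are tie isotopic. By the mobility property (Proposition~\ref{propaj}i) I first move all ties to the bottom and write, up to the moves of the theorem, $\alpha_k\sim\beta_k\,\gamma_k$ with $\beta_k\in B_n$ and $\gamma_k$ a set of generalized ties. By Definition~\ref{istls3}, tie isotopy of the closures unpacks into two conditions: (a) the classical links $\widehat{\beta_1}$ and $\widehat{\beta_2}$ are ambient isotopic, and (b) $\gamma_1$ and $\gamma_2$ induce the same partition of the components (Proposition~\ref{propaj}iii). From (a) and the classical Markov theorem, $\beta_1$ and $\beta_2$ are connected by a finite sequence of classical conjugations and stabilizations, each of which is a special case of the corresponding tied move; applying the same sequence to $\beta_1\gamma_1$ and re-normalizing the ties to generalized form after each step via Equations~\ref{eqf}, I reduce to the case $\beta_1=\beta_2=:\beta$, where now $\beta\,\gamma_1'$ and $\beta\,\gamma_2$ have (by the already-proved soundness of the intermediate moves) tie sets inducing the same partition.

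It then remains to pass from $\beta\,\gamma_1'$ to $\beta\,\gamma_2$ using Ties moves and conjugation alone. The key observation is that the Ties move adjoins any elementary tie $\eta_{i,j}$ with $s_\beta(i)=j$, that is, a tie joining a strand to the strand immediately following it around its own component; iterating along a cycle of $s_\beta$ and using the transitivity relation (v) of Equations~\ref{eqf}, which identifies $\eta_{i,k}\,\eta_{k,m}$ with $\eta_{i,m}$ inside a class, I can realize, and equally delete, any tie joining two strands lying in the same component. Hence I can first saturate $\gamma_1'$ to the full collection of ties within each class of the common partition and then thin it out to $\gamma_2$, which completes the argument. The main obstacle is precisely the bookkeeping here and in the preceding paragraph: I must ensure that the tie structure survives each classical Markov move in a controlled way, so that the permutation $s_\alpha$ recording the component structure transforms predictably and the hypothesis $s_\alpha(i)=j$ stays available for exactly the pairs I need; establishing cleanly that ``same cycle of $s_\beta$'' is generated from ``$s_\beta(i)=j$'' by the Ties move together with relation (v) is the technical heart of the completeness direction.
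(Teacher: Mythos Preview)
The paper does not give its own proof of Theorem~\ref{marktls3}; it is quoted in the preliminaries as a result of Aicardi--Juyumaya \cite{AJ1}, with only the one-line indication that it is proved ``using the mobility property and Equations~\ref{eqf}.'' Your proposal follows precisely that outline: you invoke Proposition~\ref{propaj}(i) to factor each tied braid as a classical braid times a block of generalized ties, appeal to the classical Markov theorem on the braid part, and then use the relations~\eqref{eqf} together with the Ties move to reconcile the tie blocks. So your approach is the one the paper points to, and the overall strategy is sound.

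One point worth tightening: in your step~3 you ``apply the same sequence to $\beta_1\gamma_1$ and re-normalize the ties after each step.'' For conjugation this is immediate, but for a stabilization $\beta\gamma\mapsto\beta\gamma\,\sigma_n^{\pm1}$ the ties in $\gamma$ need not literally commute with $\sigma_n$ (a tie $\eta_{i,n}$ becomes $\eta_{i,n+1}$ under~\eqref{eqf}(ii)); what survives is that the \emph{partition of components} induced after closure is unchanged, since the new strand $n{+}1$ is absorbed into the component of strand $n$. You should phrase the tracking at the level of partitions of components rather than at the level of the words $\gamma$, which is in fact how \cite{AJ1} organizes the argument and is what Proposition~\ref{propaj}(iii) is set up for.
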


\begin{remark}\rm
It is also worth mentioning that in \cite{AJ2} the authors prove that the bt-algebra (a quotient of the tied braid monoid) supports a Markov trace. Moreover, in \cite{AJ3}, the authors provide a purely algebraic and combinatorial version of tied links, with the use of which, they prove first that the tied braid monoid has a decomposition like a semi-direct product and present new proofs (different from the proofs in \cite{AJ1}) for the analogues of the Alexander and Markov theorems for tied links.
\end{remark}

\subsection{Pseudo knots \& singular knots}\label{pknot}

In this subsection we review the theory of pseudo knots and pseudo braids, introduced in \cite{HJMR, BJW}. A {\it pseudo knot diagram} consists of a regular knot diagram where some crossing information may be missing, that is, it is unknown which strand passes over and which strand passes under the other. These undetermined crossings are called {\it pre-crossings} (for an illustration see Figure~\ref{pk1}).

\begin{figure}[ht]
\begin{center}
\includegraphics[width=1.8in]{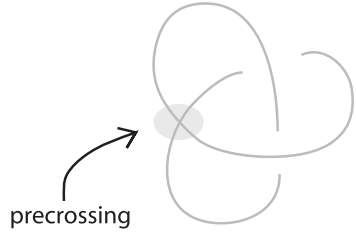}
\end{center}
\caption{A pseudo knot.}
\label{pk1}
\end{figure}

\begin{defn}\rm
A {\it pseudo knot} is defined as equivalence class of pseudo knot diagrams under all versions of the classical Reidemeister moves and the extended pseudo-Reidemeister moves illustrated in Figure~\ref{reid}.
\end{defn}

\begin{figure}[ht]
\begin{center}
\includegraphics[width=6.2in]{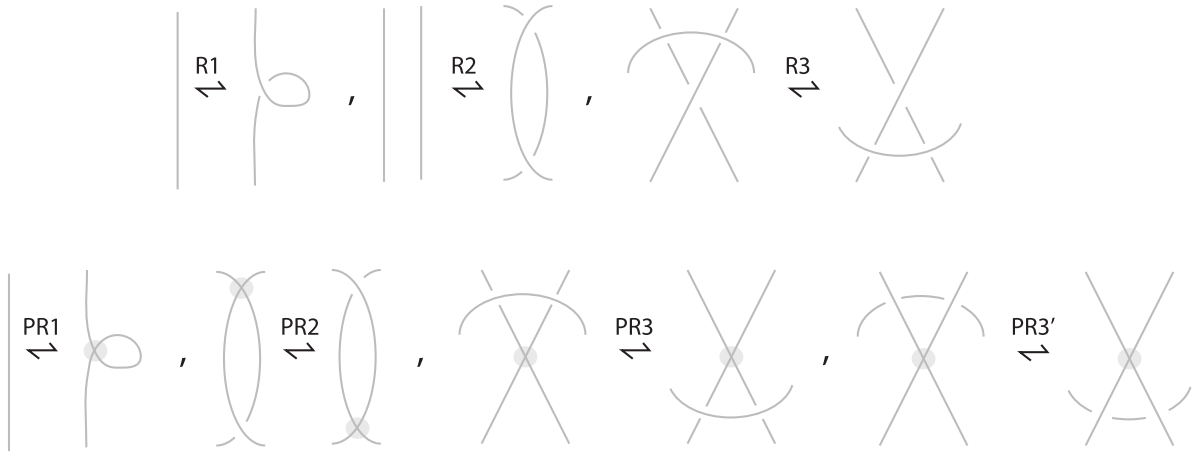}
\end{center}
\caption{Reidemeister moves for pseudo knots.}
\label{reid}
\end{figure}

As explained in \cite{BJW}, pseudo knots are closely related to {\it singular knots}, that is, knots that contain a finite number of (rigid) self-intersections. In particular, there exists a bijection $f$ from the set of singular knot diagrams to the set of pseudo knot diagrams where singular crossings are mapped to pre-crossings. In that way we may also recover all of the pseudo-Reidemeister moves, with the exception of the pseudo-Reidemeister I move (PR1). Moreover, $f$ induces an onto map from the set of singular knots to the set of pseudo knots, since the images of two isotopic singular knot diagrams are also isotopic pseudo knot diagrams with exactly the same sequence of corresponding Reidemeister moves.

\smallbreak

We now introduce the pseudo braid monoid, $PM_n$, following \cite{BJW}.

\begin{defn}\label{pmn}\rm
The {\it pseudo braid monoid}, $PM_n$, is the monoid generated by $\sigma_i^{\pm 1}, p_i, i=1, \ldots, n-1$, illustrated in Figure~\ref{gens}, where the $\sigma_i$'s generate the classical braid group $B_n$ and the $p_i$'s are the elementary pre-crossings between the $i^{th}$ and the $(i+1)^{st}$ strand, satisfying the following relations:
\[
\begin{array}{rlcll}
{\rm i.} & p_i\, p_j & = & p_j\, p_i, & {\rm if}\ |i-j|\geq 2\\
&&&&\\
{\rm ii.} & p_i\, \sigma_j^{\pm 1} & = & \sigma_j^{\pm 1}\, p_i, & {\rm if}\ |i-j|\geq 2\\
&&&&\\
{\rm iii.} & p_i\, \sigma_i^{\pm 1} & = & \sigma_i^{\pm 1}\, p_i, & i=1, \ldots, n-1\\
&&&&\\
{\rm iv.} & \sigma_i\, \sigma_{i+1}\, p_i & = & p_{i+1}\, \sigma_i\, \sigma_{i+1}, & i=1, \ldots, n-2\\
&&&&\\
{\rm v.} & \sigma_{i+1}\, \sigma_i\, p_{i+1} & = & p_{i}\, \sigma_{i+1}\, \sigma_i, & i=1, \ldots, n-2\\
\end{array}
\]
\end{defn}

\begin{figure}[ht]
\begin{center}
\includegraphics[width=2.8in]{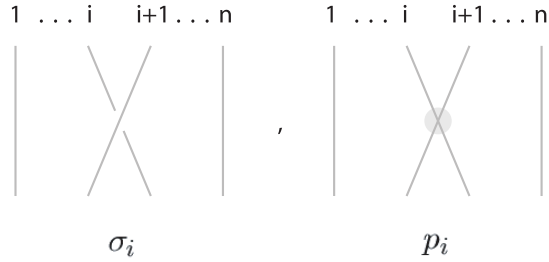}
\end{center}
\caption{The pseudo braid monoid generators.}
\label{gens}
\end{figure}

We denote a singular crossing by $\tau_i$ and we have that, if we replace the pre-crossings, $p_i$, of Definition ~\ref{pmn} by the singular crossings $\tau_i$ in the relations, then we obtain the singular braid monoid, $SM_n$, defined in \cite{Ba, Bi}. Thus, we obtain the following result:

\begin{prop}[Proposition 2.3 \cite{BJW}]\label{propb1}
The pseudo braid monoid $PM_n$ is isomorphic to the singular braid monoid, $SM_n$. 
\end{prop}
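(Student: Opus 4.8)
The plan is to construct an explicit homomorphism between the two monoids and show it is a bijection by exhibiting a two-sided inverse. Both $PM_n$ and $SM_n$ are given by generators-and-relations presentations, and the presentations are \emph{identical} after the symbolic substitution $p_i \leftrightarrow \tau_i$; this is precisely the observation recorded just before the statement. So first I would define a map $\Phi\colon PM_n \to SM_n$ on generators by $\Phi(\sigma_i^{\pm 1}) = \sigma_i^{\pm 1}$ and $\Phi(p_i) = \tau_i$, and symmetrically a map $\Psi\colon SM_n \to PM_n$ by $\Psi(\sigma_i^{\pm 1}) = \sigma_i^{\pm 1}$ and $\Psi(\tau_i) = p_i$.

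The key step is to verify that $\Phi$ and $\Psi$ are well-defined monoid homomorphisms, i.e.\ that each sends the defining relations of its source to relations that hold in its target. Since $SM_n$ is defined (in \cite{Ba, Bi}) by exactly the braid relations of $B_n$ together with relations i--v of Definition~\ref{pmn} with every $p_i$ replaced by $\tau_i$, the image under $\Phi$ of each relator of $PM_n$ is verbatim a defining relation of $SM_n$, and conversely for $\Psi$. Concretely, one checks relation by relation: the braid relations map to the braid relations, and relations i--v map to their $\tau$-counterparts (and back). Because well-definedness of a homomorphism out of a monoid presentation reduces to checking only the finitely many defining relations, this is a routine finite verification once the matching of the two presentations is made precise.

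Finally I would observe that $\Psi \circ \Phi$ and $\Phi \circ \Psi$ both fix every generator of the respective monoid, hence equal the identity on generators; since a monoid homomorphism is determined by its values on generators, $\Psi\circ\Phi = \mathrm{id}_{PM_n}$ and $\Phi\circ\Psi = \mathrm{id}_{SM_n}$. Therefore $\Phi$ is an isomorphism with inverse $\Psi$, establishing $PM_n \cong SM_n$.

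The only genuine obstacle is making the comparison of presentations airtight: one must confirm that the cited presentation of the singular braid monoid $SM_n$ in \cite{Ba, Bi} uses precisely the braid relations plus the $\tau$-analogues of i--v and \emph{no additional relations} (in particular that the singular generators $\tau_i$ are \emph{not} declared invertible and satisfy no extra identities beyond those listed). If the source presentation of $SM_n$ differs cosmetically (for instance, a different but equivalent generating relation set), I would first show those presentations are equivalent via Tietze transformations before applying the generator-matching argument. Modulo that bookkeeping, the proof is the standard "identical presentations yield isomorphic monoids via the generator correspondence'' argument, and no deeper structural input is needed.
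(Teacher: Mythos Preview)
Your proposal is correct and matches the paper's treatment: the paper does not give a detailed proof but simply records the observation that replacing each $p_i$ by $\tau_i$ in Definition~\ref{pmn} yields precisely the defining presentation of $SM_n$ from \cite{Ba,Bi}, and then cites \cite{BJW} for the resulting isomorphism. Your explicit generator-matching homomorphisms $\Phi$ and $\Psi$ are exactly the standard way to formalize this observation, so there is nothing to add.
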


\begin{remark}\label{propb2}\rm
In \cite{FKR} it is shown that the singular braid monoid $SM_n$ embeds in a group, the singular braid group $SB_n$. It follows that $PM_n$ embeds also in a group, the pseudo braid group $PB_n$. Obviously, $SB_n$ is isomorphic to $PB_n$ (Proposition 2.3 \cite{BJW}).
\end{remark}

Define now the {\it closure} of a pseudo braid as in the standard case (for an illustration see Figure~\ref{cl} by ignoring the ties). By considering the natural inclusion $PM_n \subset PM_{n+1}$, we may consider the inductive limit $PM_{\infty}$. Using the braiding algorithm for singular knots (\cite{Bi}), in \cite{BJW} the analogue of the Alexander theorem for pseudo knots is presented. In particular:

\begin{thm}[{\bf The analogue of the Alexander theorem for pseudo links}] \label{alexpl}
Every oriented pseudo knot is isotopic to the closure of a pseudo braid.
\end{thm}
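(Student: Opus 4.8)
The plan is to reduce the statement to the braiding theorem already available in the singular category and then transport the result back through the bijection $f$ between singular and pseudo knot diagrams recalled in \S\ref{pknot}. Let $K$ be an oriented pseudo knot diagram. Applying $f^{-1}$, I obtain a singular knot diagram $f^{-1}(K)$ in which every pre-crossing of $K$ has become a rigid singular crossing, while every genuine over/under crossing is left untouched.

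Next I would invoke Birman's braiding algorithm for singular knots from \cite{Bi}: it yields a singular braid $\beta \in SM_n$, for some $n$, whose standard closure $\widehat{\beta}$ is isotopic to $f^{-1}(K)$ as singular knots. Using the monoid isomorphism $PM_n \cong SM_n$ of Proposition~\ref{propb1}, which replaces each singular generator $\tau_i$ by the pre-crossing generator $p_i$ and fixes the $\sigma_i^{\pm 1}$, I translate $\beta$ into a pseudo braid $\widetilde{\beta} := f(\beta) \in PM_n$. Since the closure operation is defined identically on both sides and is performed away from all crossings, it commutes with $f$; hence $\widehat{\widetilde{\beta}} = f(\widehat{\beta})$.

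It then remains to check that the singular isotopy $\widehat{\beta} \simeq f^{-1}(K)$ descends to a pseudo isotopy after applying $f$. This is exactly the favourable direction of the correspondence: two isotopic singular knot diagrams map under $f$ to isotopic pseudo knot diagrams, realised by the same sequence of (pseudo-)Reidemeister moves. Consequently $f(\widehat{\beta}) \simeq f(f^{-1}(K)) = K$ as pseudo knots, so $\widehat{\widetilde{\beta}} \simeq K$, proving that $K$ is the closure of the pseudo braid $\widetilde{\beta}$.

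The one point that demands care, and which I expect to be the crux, is the asymmetry of $f$ flagged in the excerpt: $f$ fails to recover the move PR1. Fortunately this asymmetry obstructs only the reverse implication, namely that a pseudo isotopy need not lift to a singular one, whereas the argument above uses solely the forward direction, where $f$ is isotopy-preserving; thus the PR1 discrepancy does not affect the Alexander-type statement, although it must be isolated carefully so that no step of the braiding procedure is implicitly inverted. Alternatively, one could bypass the singular category entirely and run a geometric braiding algorithm, of Alexander type or of the Lambropoulou--Rourke type in \cite{LR1}, directly on $K$, treating each pre-crossing as a rigid vertex; the only extra subtlety there is to guarantee that the subdivision and threading steps never cut through a pre-crossing, i.e. that both strands meeting at a pre-crossing are always oriented coherently with respect to the braid axis, which is precisely the difficulty that Birman's algorithm already resolves.
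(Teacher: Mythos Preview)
Your proposal is correct and matches the approach the paper attributes to \cite{BJW} for this theorem: pass to the singular category via $f^{-1}$, apply Birman's singular braiding algorithm \cite{Bi}, and transport the resulting singular braid back through the isomorphism $SM_n\cong PM_n$ of Proposition~\ref{propb1}; your observation that only the forward, isotopy-preserving direction of $f$ is needed (so the PR1 asymmetry is harmless here) is exactly the point. Note that the paper also supplies its own direct proof later (\S\ref{alse}, Theorem~\ref{newprpkalex}), which is precisely the alternative you sketch in your final paragraph: rotate every pre-crossing so that both incident arcs point downward, then run the Lambropoulou--Rourke braiding algorithm \cite{LR1} on the remaining up-arcs, leaving the pre-crossings untouched.
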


Finally, we have a theorem for pseudo braid equivalence.

\begin{thm}[{\bf The analogue of the Markov Theorem for pseudo braids}] \label{markpl}
Two pseudo braids have isotopic closures if and only if one can  obtained from the other by a finite sequence of the following moves:

\[
\begin{array}{lllcll}
1. & {Conjugation:} &  \alpha & \sim & \beta^{\pm 1}\, \alpha\, \beta^{\mp 1}, & {\rm for}\ \alpha \in PM_n\ \&\ \beta \in B_n,\\
&&&&&\\
2. & {Commuting:} &  \alpha\, \beta & \sim & \beta\, \alpha, & {\rm for}\ \alpha,\, \beta \in PM_n,\\
&&&&&\\
3. & {Stabilization:} &  \alpha & \sim & \alpha\, \sigma_n^{\pm 1}, & {\rm for}\ \alpha \in PM_n,\\
&&&&&\\
4. & {Pseudo-stabilization:} &  \alpha & \sim & \alpha\, p_n, & \alpha \in PM_n.\\
\end{array}
\]
\end{thm}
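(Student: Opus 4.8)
The plan is to reduce the statement to the Markov theorem for singular braids and to transport everything across the isomorphism $PM_n\cong SM_n$ of Proposition~\ref{propb1} together with the diagrammatic bijection $f$ between singular and pseudo knot diagrams. The organizing principle is the observation recorded after Proposition~\ref{propb1}: under $f$ one recovers every pseudo-Reidemeister move from a singular Reidemeister move \emph{except} the move PR1. Thus I expect moves~1--3 to be the faithful images of the singular Markov moves, while move~4 (pseudo-stabilization) is precisely the braid-level incarnation of the one genuinely new move, PR1.

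For the ``only if'' direction I would verify that each move preserves the pseudo-isotopy type of the closure. The commuting move is just the cyclic invariance of the closure operation, $\widehat{\alpha\beta}=\widehat{\beta\alpha}$, and conjugation by an invertible $\beta\in B_n$ is the instance of it for the pair $\alpha\,\beta^{-1}$ and $\beta$, reading $\alpha\sim\beta\,\alpha\,\beta^{-1}$; both are therefore isotopies. Stabilization $\alpha\sim\alpha\,\sigma_n^{\pm1}$ is, on the closure, an ordinary Reidemeister-I move, exactly as in the classical and singular settings. Finally, closing up $\alpha\,p_n$ with $\alpha\in PM_n\subset PM_{n+1}$ produces a curl carrying a single pre-crossing, which the PR1 move removes to leave $\widehat{\alpha}$; hence pseudo-stabilization is an isotopy as well. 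This direction is routine.

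For the ``if'' direction, suppose two pseudo braids have pseudo-isotopic closures, and fix a realizing finite sequence of classical and pseudo-Reidemeister moves between the two closed diagrams. I would cut this sequence at every occurrence of a PR1 move into maximal PR1-free runs. Along any PR1-free run, applying $f^{-1}$ diagramwise yields an honest isotopy of singular knot diagrams; braiding by the singular algorithm (\cite{Bi}) and then invoking the Markov theorem for singular braids shows that the corresponding singular braids differ by singular conjugation, commuting and stabilization. Pushing these equivalences back through $f$ and the isomorphism $PM_n\cong SM_n$ delivers exactly moves~1--3 between the associated pseudo braids.

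What remains, and what I expect to be the main obstacle, is to account for each PR1 move that separates two consecutive runs. A PR1 move inserts or deletes a pre-crossing inside a Reidemeister-I curl, and a priori it occurs at an arbitrary place in the diagram rather than on the last strand of a braid. The key lemma to prove is that, modulo moves~1--3, every such local PR1 modification of a closed pseudo braid is realized at the braid level by a single pseudo-stabilization $\alpha\leftrightarrow\alpha\,p_n$. I would argue this by braiding the local curl so that its pre-crossing becomes an elementary generator $p_i$ sitting next to a curl, using the commuting relations (Definition~\ref{pmn}, relations i--iii) and conjugation to transport the index $i$ to $n$, and then identifying the resulting words as $\alpha$ and $\alpha\,p_n$. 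The delicate points are to show that this localization is always available and that the defining relations of $PM_n$ absorb all the indeterminacy created when the curl is dragged to the top strand; these amount to the analogue of the standard braiding-uniqueness analysis, now in the presence of pre-crossings. Granting this lemma, concatenating the singular-Markov equivalences on the PR1-free runs with one pseudo-stabilization per PR1 move yields the desired finite sequence of moves~1--4, completing the proof.
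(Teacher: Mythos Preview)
Your overall strategy matches the paper's (which is recounting the argument of \cite{BJW}): reduce to the singular Markov theorem via the isomorphism $PM_n\cong SM_n$, observe that every pseudo-Reidemeister move except PR1 is already covered by the singular theory, and show that PR1 corresponds at the braid level to pseudo-stabilization. Your ``only if'' direction and your decomposition of the isotopy into PR1-free runs are exactly what is done.

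The one place you diverge is in the treatment of a single PR1 move. You propose to braid first and then algebraically transport the resulting $p_i$ to index $n$ via conjugation and the commuting relations, flagging this ``key lemma'' as delicate. The paper's route avoids the transport problem entirely: given $L,L'$ differing by one PR1, position them in $S^2$ so that they are identical outside a small disk containing the curl, run the braiding algorithm identically on all up-arcs outside that disk, and leave the PR1 region for last. With this choice of braiding the curl lands on the outermost strand by construction, and the two resulting braids visibly differ by exactly one factor $p_n$ (see Figure~\ref{mal}); move~4 thus appears directly. Your approach would also go through, but the ``delicate'' step dissolves once you notice that any two braidings of the same pseudo diagram are already related by moves 1--3 via the singular Markov theorem, so one is free to choose the convenient braiding rather than fight with an arbitrary one.
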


In the proof of Theorem~\ref{markpl}, the fact that singular knots are closely related to pseudo knots is used, and the only interesting case is the case where two pseudo knots differ by a pseudo-Reidemeister move 1 (see left-most and right-most illustrations in Figure~\ref{mal}). More precisely, let $L, L^{\prime}$ be two pseudo knots that differ by a pseudo Reidemeister move 1 and let $l, l^{\prime}$ be the images of $L, L^{\prime}$ respectively under the isomorphism between singular braids and pseudo braids. By the analogue of the Alexander theorem for singular knots there are singular braids $\alpha, \alpha^{\prime}$, whose closures are singular isotopic to $l, l^{\prime}$ respectively. The images of these singular braids under the natural map between singular braids and pseudo braids are the pseudo braids $\beta, \beta^{\prime}$, whose closures are pseudo-isotopic to $L$ and $L^{\prime}$ respectively. In \cite{BJW} it is shown then that $\alpha$ differs from $\alpha^{\prime}$ by the Relation~4 of Theorem~\ref{markpl} and the proof is concluded. The above are summarized in the following diagram:

\[
\begin{array}{ccccccc}
&         & L             & \overset{{\bf PR1-moves}}{\longleftrightarrow}  & L^{\prime}     &     &      \\
&\swarrow & \uparrow     &                                                 & \uparrow       &  \searrow &  \\
l&       & \beta         &                                                 & \beta^{\prime} & &l^{\prime}  \\
&\nwarrow & \updownarrow  &                                                 & \updownarrow   & \nearrow  &   \\
&         & \alpha        &   \overset{{\bf Rel.(4)}}{\longleftrightarrow}
                                             & \alpha^{\prime}&         &     \\
\end{array}
\]

\begin{figure}[ht]
\begin{center}
\includegraphics[width=4.2in]{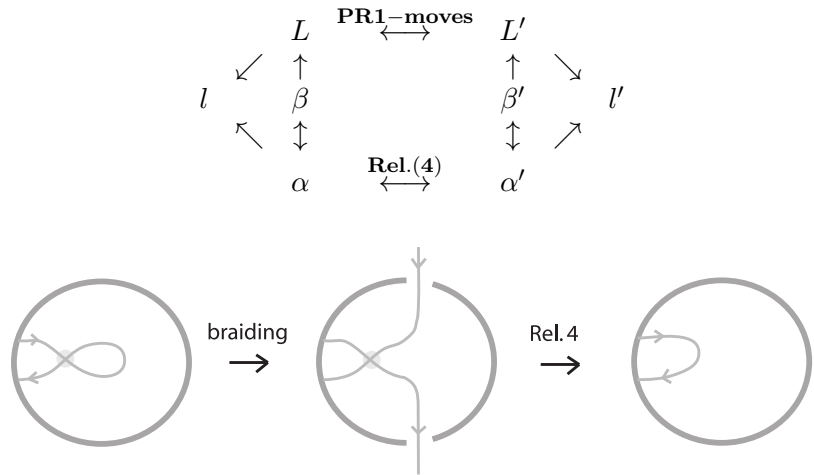}
\end{center}
\caption{RP1 and Rel. 4.}
\label{mal}
\end{figure}

\subsection{Knotoids \& Braidoids}\label{knbroid}

Knotoids were introduced by Turaev in \cite{T} as a generalization of 1-1 tangles by allowing the endpoints to be in different regions of the diagram. More precisely:

\begin{defn}\rm
A {\it knotoid diagram} $K$ in an oriented surface $\Sigma$ is a generic immersion of the unit interval $[0, 1]$ into  
$\Sigma$ whose only singularities are transversal double points endowed with over/undercrossing data called crossings. The images of $0$ and $1$ under this immersion are called the endpoints of $K$ (leg and head of $K$ respectively) and are distinct from each other and from the double points.
\end{defn}

A {\it knotoid} in $\Sigma$ is then an equivalence class of knotoid diagrams in $\Sigma$ up to the equivalence relation induced by the R1, R2, R3 moves (recall Figure~\ref{reid}) and planar isotopy. It is worth mentioning that we are not allowed to pull a strand that is adjacent to an endpoint, over or under a transversal arc, since this will result into a trivial knotoid diagram. These moves are illustrated in Figure~\ref{forb}, and they are called {\it forbidden moves} of knotoids.  

\begin{figure}[ht]
\begin{center}
\includegraphics[width=4.5in]{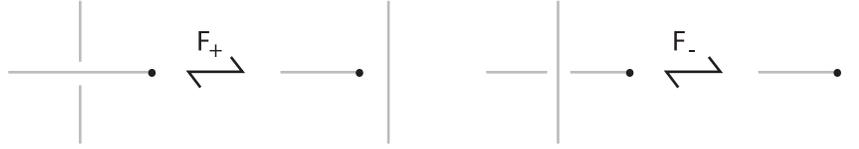}
\end{center}
\caption{The forbidden moves.}
\label{forb}
\end{figure}

Note that there are two situations where forbidden moves, seemingly occur as illustated in Figure~\ref{ffm}. We shall call these moves
{\it fake forbidden moves}.

\begin{figure}[ht]
\begin{center}
\includegraphics[width=5.6in]{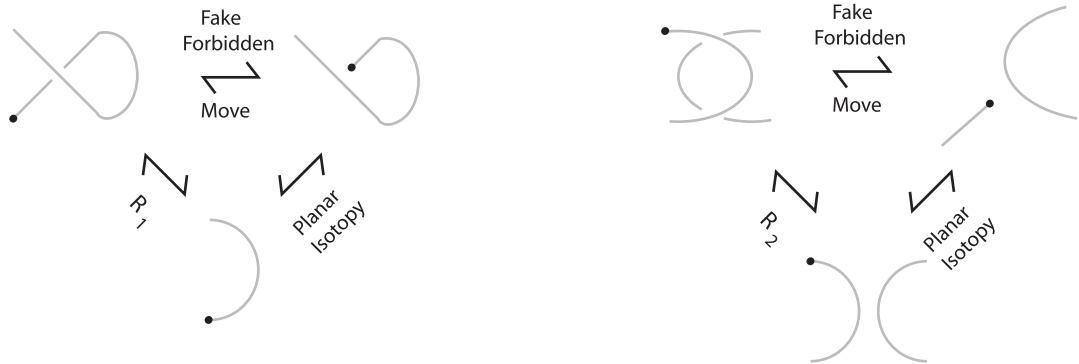}
\end{center}
\caption{Fake forbidden moves.}
\label{ffm}
\end{figure}

The definition of knotoids can be extended to linkoids and multi-knotoids as follows: a {\it linkoid diagram} is defined to be an immersion of a disjoint union of finitely many unit intervals whose images are knotoid diagrams and a {\it multi-knotoid diagram} is defined to be a union of a knotoid diagram and a finite number of knot diagrams. 

\smallbreak

Knotoid equivalence extends naturally to both linkoid diagrams and multi-knotoid diagrams, leading to the theory of linkoids and multi-knotoids. Finally, note that knotoids and linkoids are naturally oriented from the leg to the head.

\smallbreak

As noted in \cite{T}, the theory of knotoid diagrams suggests a new diagrammatic approach to knots, since every knotoid diagram determines a classical knot if we connect the endpoints of a knotoid diagram with an arc in $S^2$ that goes under or over each arc it meets. We call the resulting knots in $\mathbb{R}^3$, the {\it underpass closure} and the {\it overpass closure} of the knotoid respectively. Obviously, different closures of a knotoid may result in different knots. Thus, in order to represent knots via knotoid diagrams, we fix the closure type and we have the following result from \cite{T}:

\smallbreak

\begin{prop}
Assuming a specific closure type, there is a well-defined surjective map from knotoid diagrams to classical knots.
\end{prop}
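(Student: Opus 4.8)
The plan is to establish the two claims in the proposition—well-definedness and surjectivity—separately, once a closure type (say the underpass closure) has been fixed. \emph{Well-definedness} means that the induced map on equivalence classes does not depend on the chosen knotoid diagram representing a given knotoid, so the first task is to show that the closure of a knotoid diagram is invariant (as a classical knot in $\mathbb{R}^3$) under each of the generating moves of knotoid equivalence, namely the Reidemeister moves R1, R2, R3 of Figure~\ref{reid} and planar isotopy. First I would verify that planar isotopy of the knotoid diagram, together with an isotopy of the connecting arc that realizes the fixed closure convention, extends to an ambient isotopy of the closed-up diagram in $S^2$, hence of the associated knot. Then I would check each Reidemeister move: since a move is performed in a small disk disjoint from the endpoints, the connecting arc can be chosen to avoid that disk, so the same move applies verbatim to the closure and produces an isotopic classical knot.

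The only delicate point—and the step I expect to be the main obstacle—is the interaction between the connecting arc and strands adjacent to the endpoints, precisely the region where the forbidden moves of Figure~\ref{forb} live. Because the closure arc passes under (or over, per the fixed convention) every strand it meets, I must confirm that a Reidemeister move performed near, but not at, an endpoint still yields the \emph{same} knot after closure, and that the closure convention removes the ambiguity that makes the forbidden moves illegal at the knotoid level. The key observation here is that once the endpoints are joined by the closure arc with a uniform over/under rule, a strand can be slid past the former endpoint by an ordinary Reidemeister~II move in the closed diagram: what was forbidden for knotoids becomes an allowed isotopy for the resulting knot, so invariance is not violated. I would make this explicit by drawing the local picture of the closure arc near each endpoint and exhibiting the relevant R2 move in $S^2$.

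For \emph{surjectivity}, the plan is to produce, for an arbitrary classical knot $K$ in $\mathbb{R}^3$, a knotoid diagram whose fixed closure is $K$. Given a diagram of $K$ in $S^2$, I would choose a short unknotted arc of the diagram, delete a small subarc from it, and declare the two newly created free ends to be the leg and the head; the result is a knotoid diagram. Reattaching those ends by an arc following the deleted subarc recovers $K$ up to isotopy, provided the reattaching arc is isotoped to agree with the fixed closure convention—which can always be arranged by pushing the arc to pass under (or over) the strands it crosses, each such adjustment being a Reidemeister move on $K$. Hence every classical knot arises as a closure, giving surjectivity.

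Finally I would assemble these pieces: well-definedness shows the map descends to equivalence classes of knotoids, and surjectivity shows it hits every classical knot, so the composite is a well-defined surjective map from knotoid diagrams (equivalently, from knotoids) to classical knots, as asserted. The essential content, and where the argument must be handled with care rather than by routine move-checking, is the endpoint/forbidden-move analysis in the second paragraph; the remaining verifications are local and diagrammatic.
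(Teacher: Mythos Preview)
The paper does not give its own proof of this proposition; it is quoted as a result from Turaev~\cite{T}, so there is no in-paper argument to compare against. Your sketch is therefore being assessed on its own merits.

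Your overall architecture (check invariance under the knotoid Reidemeister moves, then exhibit a preimage for any knot by cutting a short arc) is sound and is essentially how Turaev proceeds. Two comments on emphasis. First, the literal statement is a map from knotoid \emph{diagrams} to classical knots, so the primary content of ``well-defined'' is that, for a fixed diagram, any two underpass (resp.\ overpass) arcs joining the endpoints produce isotopic knots; this is the step you mention only in passing under planar isotopy, and it deserves to be stated explicitly (the argument is that one such arc can be slid to another entirely under the diagram via R2/R3 moves). Second, your second paragraph over-weights the forbidden moves: since they are \emph{not} part of knotoid equivalence, there is nothing to verify about them for well-definedness. Your observation that the closure turns a would-be forbidden move into a legitimate R2 move is correct, but its significance is that the closure map is not injective, not that it is well-defined. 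Once you reallocate the care from the forbidden-move discussion to the independence-of-closure-arc step, the argument is complete.
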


In \cite{GL1} braidoid diagrams are defined (similarly to classical braid diagrams), as a system of finite descending strands that involves one or two strands starting with or terminating at an endpoint that is not necessarily at top or bottom lines of the defining region of the diagram. More precisely:

\begin{defn}\rm
A {\it braidoid diagram} $B$ is a system of a finite number of arcs immersed in $[0, 1] \times [0,1] \subset \mathbb{R}^2$, where $\mathbb{R}^2$ is identified with the xt-plane, such that the t-axis is directed downward. The arcs of $B$ are called the strands of $B$. Each strand of $B$ is naturally oriented downward, with no local maxima or minima, following the natural orientation of $[0, 1]$. Moreover, there are only finitely many intersection points among the strands, which are transversal double points endowed with over/under data, and are called crossings of $B$.
\end{defn}

A braidoid diagram has two types of strands, the classical strands, i.e. braid strands connecting points on $[0, 1]\times \{0\}$ to points on $[0, 1] \times \{1\}$, and the {\it free strands} that either connect a point in $[0,1]\times \{0\}$ or in $[0,1]\times \{1\}$ to an {\it endpoint} located anywhere in $[0, 1]\times [0, 1]$, or they connect two endpoints that are located anywhere in $[0, 1] \times [0, 1]$. These points that don't necessarily lie on $[0, 1] \times \{0\}$ or $[0, 1] \times \{1\}$, are called {\it braidoid ends}. For more details and examples the reader is referred to \cite{GL1}. We now present braidoid isotopy:

\begin{defn}\label{broidiso}\rm
Two braidoid diagrams are said to be isotopic if one can be obtained from the other by a finite sequence of the following moves, that we call  {\it braidoid isotopy} moves:
\smallbreak
\begin{itemize}
\item[$\bullet$] {\it Braidoid} $\Delta${\it -moves} illustrated in the left part of Figure~\ref{biso1}: a $\Delta$-move replaces a segment of a strand with two segments in a triangular disk free of endpoints, passing only over or under the arcs intersecting the triangular region of the move whilst the downward orientation of the strands is preserved.
\smallbreak
\item[$\bullet$] {\it Vertical moves} as illustrated in the right part of Figure~\ref{biso1}: the endpoints of a braidoid diagram can be pulled up or down in the vertical direction but without letting an endpoint of a braidoid diagram to be pushed/pulled over or under a strand (recall the forbidden moves).
\smallbreak

\begin{figure}[ht]
\begin{center}
\includegraphics[width=3.2in]{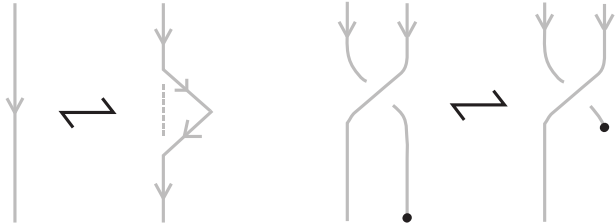}
\end{center}
\caption{A $\Delta$-move and a vertical move on a braidoid.}
\label{biso1}
\end{figure}

\item[$\bullet$] {\it Swing moves} as illustrated in Figure~\ref{biso2}: the endpoints are allowed to swing to the right or the left like a
pendulum as long as the downward orientation on the moving arc is preserved, and the forbidden moves are not violated.
\end{itemize}
\smallbreak
\begin{figure}[ht]
\begin{center}
\includegraphics[width=3.5in]{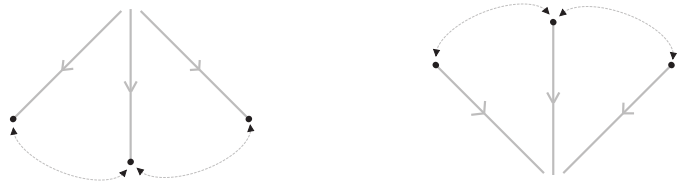}
\end{center}
\caption{The swing moves on braidoids.}
\label{biso2}
\end{figure}
\smallbreak
An isotopy class of braidoid diagrams is called a {\it braidoid}. Moreover, a {\it labeled braidoid diagram} is a braidoid diagram with a label over or under assigned to each pair of corresponding ends.
\end{defn}

\begin{defn}\rm
A {\it labeled braidoid diagram} is a braidoid diagram whose corresponding ends are labeled either with ``o'' or ``u'' in pairs. The {\it closure} of a labeled braidoid is realized by joining each pair of corresponding ends by a vertical segment, either over or under the rest of the braidoid and according to the label attached to these braidoid ends (see Figure~\ref{clab}).
\end{defn}

\begin{figure}[ht]
\begin{center}
\includegraphics[width=5.5in]{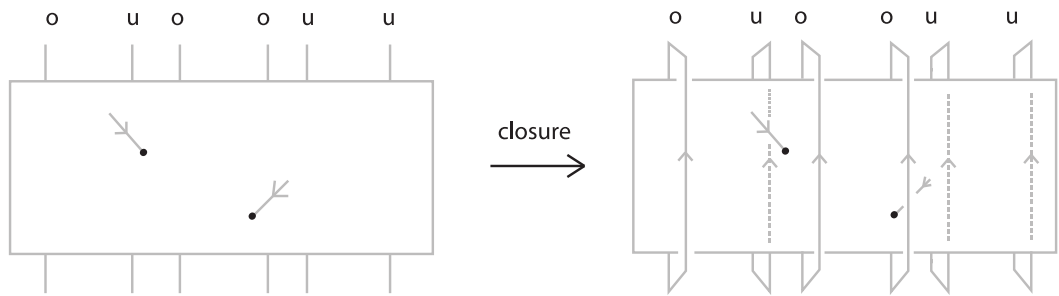}
\end{center}
\caption{The closure of a labeled braidoid.}
\label{clab}
\end{figure}

In \cite{GL1, GL2}, the authors present a braidoiding algorithm for knotoids, with the use of which, they obtain the following result:

\begin{thm}[{\bf The analogue of the Alexander theorem for knotoids}] \label{alexkn}
Any (multi)-knotoid diagram is isotopic to the closure of a (labeled) braidoid diagram.
\end{thm}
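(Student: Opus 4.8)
The plan is to adapt the braiding algorithm of Lambropoulou--Rourke (\cite{LR1}) to the knotoid setting, following \cite{GL1, GL2}, so that the output is a \emph{labeled} braidoid whose closure returns the original diagram. First I would place the given (multi)-knotoid diagram $K$ in general position with respect to the height function determined by the $t$-coordinate (the $t$-axis pointing downward, as in the braidoid convention), orienting $K$ from its leg to its head and orienting the closed knot components arbitrarily in the multi-knotoid case. Finitely many crossings and finitely many local extrema of the height function may be assumed.

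Next I would subdivide $K$ into arcs at its crossings and at its local maxima and minima, and classify each arc as a \emph{down-arc} if its orientation agrees with the downward direction, or as an \emph{up-arc} otherwise. Since a braidoid diagram is precisely a system of descending strands, the goal of the algorithm is to remove all up-arcs. The key tool is the \emph{braiding move}: given an up-arc in the interior of $K$ (one not meeting an endpoint), one selects an interior point of the arc, cuts it, and slides the two resulting free ends respectively up to the top line and down to the bottom line, one passing entirely over and the other entirely under all strands it meets. This replaces the up-arc by a descending pair of corresponding braidoid strands together with a connecting arc, and we record the label ``o'' or ``u'' according to whether that connecting arc runs over or under the rest of the diagram. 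Each such move strictly decreases the number of up-arcs, so after finitely many moves no interior up-arcs remain; the closed components of a multi-knotoid are braided in exactly this classical fashion.

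The delicate point is the treatment of the two endpoints. Because of the forbidden moves, an arc adjacent to an endpoint cannot be pushed over or under a transversal strand, so the braiding move above may \emph{not} be applied to an up-arc ending at the leg or the head. Instead I would keep each endpoint fixed as a \emph{braidoid end} and leave the short subarc carrying it as a \emph{free strand}. Concretely, if an endpoint sits on an up-arc, I make the braiding cut on the interior portion of that up-arc, away from the endpoint, so that the endpoint stays on a downward-oriented free strand while the interior piece is braided as above; the vertical and swing moves of braidoid isotopy (Definition~\ref{broidiso}) are then used to arrange these free strands cleanly. This is exactly where the knotoid theory departs from the classical case, and verifying that every up-arc can indeed be braided in its interior while both endpoints remain admissible free ends --- without ever performing a forbidden move --- is the main obstacle.

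After all up-arcs have been eliminated in this way, the resulting diagram is a descending system of strands with exactly two braidoid ends (leg and head) carried on free strands and with a label assigned to each pair of corresponding braided ends; that is, a labeled braidoid diagram $B$. Finally I would check that the labeled closure of $B$ --- reconnecting each braided pair by a vertical arc carrying its recorded over/under label and closing the knot components in the standard way --- recovers $K$ up to braidoid isotopy and the moves R1, R2, R3, so that $K$ is isotopic to the closure of $B$. Termination is immediate, since each braiding move reduces the finite number of up-arcs by one.
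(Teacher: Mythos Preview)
This theorem is not proved in the paper; it is quoted from \cite{GL1, GL2} as a preliminary result, so there is no in-paper argument to compare against. Your proposal is precisely the strategy of the cited braidoiding algorithm --- adapt \cite{LR1} to the knotoid setting, braid the up-arcs, keep the endpoints on free strands, and label the new pairs of corresponding ends --- so at the level of approach you are aligned with what the paper invokes.

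There is, however, a concrete error in your description of the braiding move. You say the two new strands pass ``one entirely over and the other entirely under'' the rest of the diagram. In the Lambropoulou--Rourke move and in its braidoid version, \emph{both} new strands must cross the rest of the diagram with the \emph{same} type: both over in an $o$-braiding move, both under in a $u$-braiding move. The type is fixed beforehand by the label of the up-arc (over-crossings give ``o'', under-crossings give ``u'', free up-arcs arbitrarily), after first subdividing so that each up-arc carries crossings of only one kind; see the algorithm recapitulated in \S\ref{alse}. Upon closure the labeled vertical arc joins the new pair of ends with that same type, and the concatenation of the two new strands with the closure arc is then isotopic to the original up-arc; with one-over/one-under this concatenation would have mixed crossing data and the closure would not recover the starting diagram. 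Your treatment of the endpoints is also a bit loose: cutting the interior of an up-arc that carries an endpoint does not by itself leave that endpoint on a \emph{downward}-oriented free strand, and the vertical and swing moves alone do not reverse orientation; in \cite{GL1, GL2} the endpoint-adjacent arcs are handled more carefully.
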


\begin{remark}\rm
It is crucial to note that different labels on the endpoints of a braidoid may yield non-equivalent closures. For more details the reader is referred to \cite{GL1}. Moreover, it is worth mentioning that in \cite{GL1} the authors prove that any knotoid diagram may be isotoped to be the closure of some labeled braidoid diagram whose labels are all ``u'' (\cite{GL1} Corollary 1), and they define a {\it uniform braidoid} to be a labeled braidoid with all labels ``u''.
\end{remark}

In \S~\ref{lmovepk} we recall the braiding algorithm and the $L$-moves from \cite{LR1} and we show that these notions can also be adopted for pseudo knots. In \cite{Gu}, [GL2] this braiding algorithm and the $L$-moves are adopted accordingly for the case of knotoids in order to prove a braidoid equivalence theorem. We recall these results in \S~\ref{pbrd} and we show how to adopt these notions for the case of pseudo knotoids.

\section{$L$-moves \& pseudo knots}\label{lmovepk}

In this section we introduce the notion of $L$-moves for pseudo knots in $S^3$ and we prove an $L$-move analogue of the Markov theorem for pseudo braids. We recall first the classical definition of an $L$-move (\cite{LR1}).

\begin{defn}\label{lmdefn}\rm
An {\it $L$-move} on a classical braid $\beta$, consists in cutting an arc of $\beta$ open and pulling the upper cutpoint downward and the lower upward, so as to create a new pair of braid strands with corresponding endpoints (on the vertical line of the cutpoint), and such that both strands cross entirely {\it over} or {\it under} with the rest of the braid. Stretching the new strands over will give rise to an {\it $L_o$-move\/} and under to an {\it  $L_u$-move\/} as shown in Figure~\ref{lm} by ignoring all pre-crossings.
\end{defn} 

In \cite{LR1} a sharpened version of the Markov theorem is proved using only the $L$-moves. $L$-moves make up an important tool for formulating braid equivalences in any topological setting and they prove to be particularly useful in settings where the sets of braid analogues do not have a ``nice'' algebraic structure. In \cite{LR1, La}, $L$-moves and braid equivalence theorems are presented for different knot theories. We now recall braid equivalence for singular knots.

\smallbreak

The $L$-moves for singular braid equivalence are defined \cite{La} as in the classical case (Definition~\ref{lmdefn}), that is, the two strands that appear after the performance of an $L$-move should cross the rest of the braid only with real crossings (all over in the case of an $L_o$-move or all under in the case of an $L_u$-move). Finally, with the use of the $L$-moves, in \cite{La}, a sharpened version of the analogue of the Markov theorem (compared to \cite{G}) for singular braids is presented:

\begin{thm}[{\bf The analogue of the Markov Theorem for singular links}]\label{mthsll}
Two oriented singular links are isotopic if and only if any two corresponding singular braids differ by braid relations in $SB_{\infty}$ and a finite sequence of the following moves:
\[
\begin{array}{rlccc}
{\rm i.} & {Singular\ commuting:} & \tau_i\, \alpha & \sim & \alpha\, \tau_i\\
&&&&\\
{\rm ii.} & {L-moves}, & &&\\
\end{array}
\]
\smallbreak
\noindent where $\alpha \in SB_n$, the singular braid group, and $\tau_i$ a singular crossing.
\end{thm}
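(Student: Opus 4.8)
The plan is to prove the two implications separately, with the reverse (``only if'') direction being the substantial one. For the forward direction I would verify that each listed move preserves the isotopy class of the closure: braid relations in $SB_\infty$ leave the closed braid unchanged on the nose; the singular commuting move $\tau_i\,\alpha\sim\alpha\,\tau_i$ is a conjugation (equivalently, a cyclic permutation of the braid word), which amounts to sliding a singular crossing once around the closure arc and therefore gives a singular-isotopic link; and an $L$-move produces an isotopic closure because, after closing up, the newly created pair of strands cancels by a planar isotopy together with Reidemeister-type moves on the rest of the diagram. The only point needing attention here is that, by the definition of an $L$-move in the singular setting (Definition~\ref{lmdefn} as adapted in \cite{La}), the two new strands meet the rest of the braid only in real crossings, so no spurious singular crossing is created and the cancellation is the classical one.

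For the reverse direction I would first invoke a Reidemeister-type theorem for singular link diagrams: two isotopic oriented singular links are related by a finite sequence of planar isotopies, the three classical Reidemeister moves performed away from the singular points, and the singular moves that slide a strand across a rigid singular crossing (the singular analogues of R2/R3) together with the rotation of a singular crossing. I would then braid every diagram in this sequence using the braiding algorithm of \cite{LR1} as it applies to singular diagrams, with singular crossings treated as rigid (cf.\ \cite{Bi} and Theorem~\ref{alexpl}), so that the whole isotopy is transported to the level of singular braids.

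The heart of the argument is the following sharpening lemma, modelled on \cite{LR1}: any two singular braids obtained by braiding the \emph{same} singular diagram differ by braid relations in $SB_\infty$ and a finite sequence of $L$-moves. Granting this, I would run through the elementary diagram moves one at a time and record their braid-level effect. A move performed entirely ``in braid position'' contributes only braid relations; a move that forces the braiding algorithm to cut and re-route an arc contributes $L$-moves, and this is precisely the mechanism by which an R1 move becomes a stabilization and, more generally, by which both Markov conjugation and stabilization are absorbed into the $L$-moves (which is why neither appears in the statement). The singular slide-and-rotation moves are the ones that cannot be fully captured by $L$-moves, since an $L$-move cannot be performed on a strand incident to a rigid singular crossing; I would show that each of them reduces to an instance of singular commuting $\tau_i\,\alpha\sim\alpha\,\tau_i$ together with braid relations and $L$-moves, the singular commuting move playing for singular crossings the role that conjugation-via-$L$-moves plays for ordinary strands.

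I expect the main obstacle to be the case analysis around the rigid singular crossings. Because a singular crossing is frozen, we cannot perform R1-type simplifications next to it, so the reduction of each local configuration that passes a strand over or under a $\tau_i$ must be carried out carefully and shown to require only singular commuting and $L$-moves. A second delicate point, already flagged in the forward direction, is to verify that whenever the braiding algorithm or an intermediate $L$-move meets a singular crossing, the new strands can always be chosen to cross the rest of the braid with real crossings only, so that the singular data is never altered. Once these two points are settled, assembling the elementary contributions along the whole isotopy sequence yields the required chain of braid relations, singular commuting moves, and $L$-moves, completing the proof.
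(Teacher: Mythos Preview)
The paper does not prove this theorem; it is quoted from \cite{La} as background. The argument the paper has in mind, and spells out for the parallel pseudo case just below (Theorem~\ref{lmarkpl}), is \emph{not} a from-scratch derivation: one starts from an already established Markov theorem for singular braids (Gemein \cite{G}), observes that $L$-moves realize real conjugation and contain real stabilization as a special case, and then simply replaces those two moves by $L$-moves in the move list, leaving singular commuting as the one move that $L$-moves cannot reproduce.

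Your plan takes the long way round: you propose to redo everything from a Reidemeister theorem for singular diagrams, braid every diagram in an isotopy sequence via \cite{LR1}, and analyse each local move. That is the strategy of \cite{LR1} in the classical case and it can be made to work here, but it is a much heavier argument than the ``sharpen Gemein'' route the paper is invoking. One point in your outline is misdiagnosed: the reason singular commuting survives as an independent move is not that ``an $L$-move cannot be performed on a strand incident to a rigid singular crossing'' --- an $L$-move may be applied at any interior point of a strand --- but rather that $L$-moves introduce only real crossings, so they cannot simulate carrying a singular crossing once around the closure; this is exactly what the paper illustrates in Figure~\ref{pcsing}. Likewise, the singular Reidemeister moves themselves translate directly to the defining relations of $SB_n$ and do not in themselves force singular commuting; the commuting move is needed at the global level, when relating two braidings whose singular crossings sit at different cyclic positions in the braid word.
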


\subsection{$L$-moves in pseudo braids}\label{lpb}

Our intention is to present an $L$-move braid equivalence theorem for pseudo knots. We first need to define $L$-moves for pseudo braids. $L$-moves for pseudo braids are defined in the same way as in the singular case (for an illustration see Figure~\ref{lm}). 

\begin{defn}\rm
An {\it $L$-move} on a pseudo braid $\beta$, consists in cutting an arc of $\beta$ open and pulling the upper cutpoint downward and the lower upward, so as to create a new pair of braid strands with corresponding endpoints (on the vertical line of the cutpoint), and such that both strands cross the rest of the braid only with real crossings (all over in the case of an $L_o$-move or all under in the case of an $L_u$-move). When applying an $L$-move on a pseudo braid, an inbox classical crossing or a pre-crossing may be introduced as illustrated in Figure~\ref{plm} (recall move PR1 in Figure~\ref{reid}).
\end{defn}

\begin{figure}[ht]
\begin{center}
\includegraphics[width=4.9in]{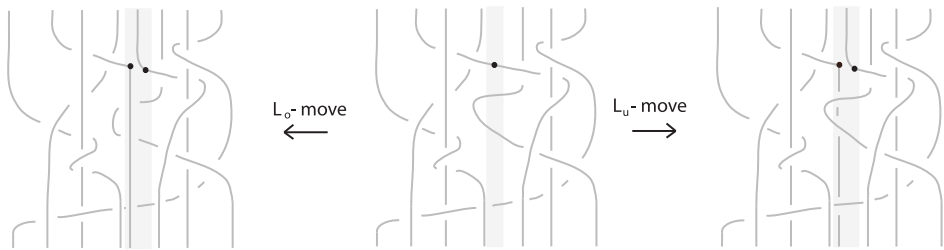}
\end{center}
\caption{$L$-moves for pseudo braids.}
\label{lm}
\end{figure}

\begin{remark}\rm
Note that in the case of singular braids the introduction of an inbox-singular crossing is not allowed when performing an $L$-move, since the singular kink in the contraction would never be eliminated, due to the forbidden Reidemeister 1 move. {\it This constitutes the main difference between the theory of pseudo braids and the theory of singular braids.}
\end{remark}

\begin{figure}[H]
\begin{center}
\includegraphics[width=4.9in]{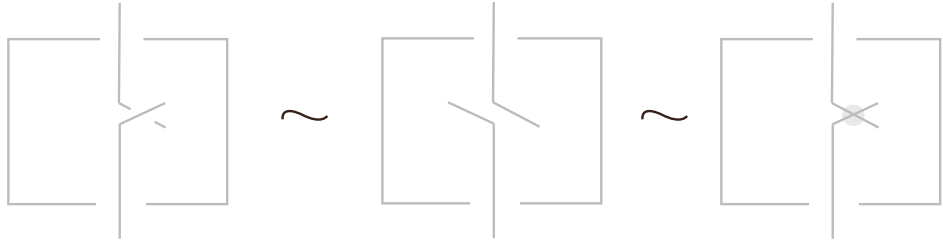}
\end{center}
\caption{An $L$-move in a pseudo braid introducing a real and a pseudo crossing.}
\label{plm}
\end{figure}

\subsection{$L$-move braiding algorithm for pseudo links}\label{alse}

In the classical setting the counterpart of the $L$-moves on oriented link diagrams are the {\it braiding moves}, as illustrated abstractly in Figure~\ref{ahg}, which were used in \cite{LR1} for braiding any oriented link diagram. The main idea is to keep the arcs of the oriented link diagrams that go downwards with respect to the height function unaffected and replace arcs that go upwards with braid strands. These arcs are called {\it up-arcs} (see Figure~\ref{upa}).

\begin{figure}[ht]
\begin{center}
\includegraphics[width=4.4in]{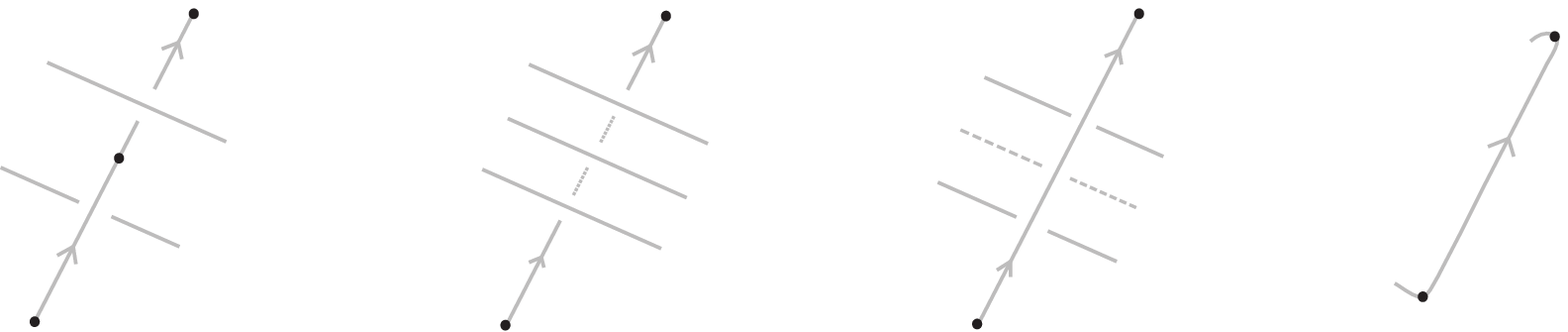}
\end{center}
\caption{Up-arcs.}
\label{upa}
\end{figure}

In order to use this classical braiding algorithm on pseudo knots we need to also deal with pre-crossings in the diagram which contain at least one up-arc. For this we apply the idea used in \cite{Bi} for the case of singular knots, a detailed proof of which can be found in \cite{PR} Theorem~2.3 (see also \cite{KL} for the case of virtual knots). Namely, before we apply the braiding algorithm we have to isotope the pseudo link in such a way that the pre-crossings will only contain down-arcs, so that the braiding algorithm will not affect them. This is achieved by rotating all pre-crossings that contain at least one up-arc, so that the two arcs are now directed downward, as illustrated in Figure~\ref{tw}. Then we may apply the braiding algorithm of \cite{LR1} for the knot (ignoring the pre-crossings).

\begin{figure}[ht]
\begin{center}
\includegraphics[width=4.7in]{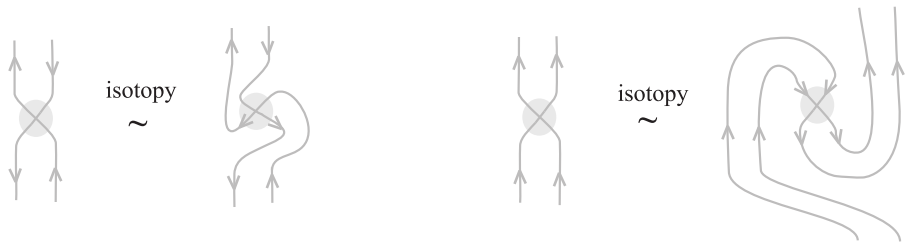}
\end{center}
\caption{Rotating pre-crossings.}
\label{tw}
\end{figure}

The algorithm is summarized as follows:

\bigbreak

\begin{itemize}
\item We first isotope the diagram of the pseudo link as described above. Then, we apply the following braiding algorithm for the knot:
\smallbreak
\item We chose a base-point and we run along the diagram of the (pseudo) link according to its orientation.
\smallbreak
\item When/If we run along an opposite arc, we subdivide it into smaller arcs, each containing crossings of one type only as shown in Figure~\ref{upa}.
\smallbreak
\item We now label every up-arc with an ``o''or a ``u'', according to the crossings it contains. If it contains no crossings, then the choice is arbitrary.
\smallbreak
\item We perform an $o$-braiding moves on all up-arcs which were labeled with an ``o'' and $u$-braiding moves on all up-arcs which were labeled with an ``u'' (see Figure~\ref{ahg}).
\smallbreak

\begin{figure}[ht]
\begin{center}
\includegraphics[width=2.4in]{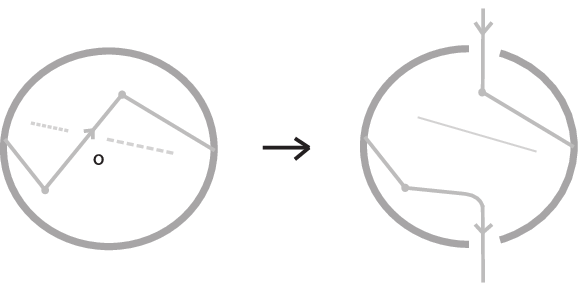}
\end{center}
\caption{Braiding moves for up-arcs.}
\label{ahg}
\end{figure}

\item The result is a pseudo braid whose closure is isotopic to the initial pseudo link.

\end{itemize}

The above algorithm provides a proof of the following:

\begin{thm}[{\bf The analogue of the Alexander theorem for pseudo knots}]\label{newprpkalex}
Every oriented pseudo knot is isotopic to the closure of a pseudo braid.
\end{thm}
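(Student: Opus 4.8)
The plan is to prove Theorem~\ref{newprpkalex} constructively by establishing that the braiding algorithm just described always terminates and produces a pseudo braid whose closure is pseudo-isotopic to the original pseudo knot diagram. First I would observe that the theorem is really an assertion about the correctness of the preceding algorithm, so the proof amounts to verifying three things: that the preprocessing step (rotating pre-crossings containing up-arcs) can always be carried out, that the classical braiding moves of \cite{LR1} can then be applied without disturbing the pre-crossings, and that each elementary move preserves the isotopy class of the resulting closure.

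\emph{Step 1 (Preprocessing the pre-crossings).} I would begin by isotoping the diagram so that every pre-crossing involves only down-arcs. For each pre-crossing containing at least one up-arc, a local rotation by a small angle (as in Figure~\ref{tw}) reorients both arcs downward; this is an application of the planar isotopy underlying move PR2, and since a pre-crossing carries no over/under data the rotation introduces no ambiguity. I would note that this mimics the singular-knot argument of \cite{Bi} and \cite{PR}~Theorem~2.3, where the same rotation is used for rigid vertices. After this step all pre-crossings consist solely of down-arcs and hence lie entirely inside the ``downward'' part of the diagram that the braiding algorithm leaves untouched.

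\emph{Step 2 (Classical braiding on the remaining diagram).} With the pre-crossings safely reoriented, I would apply the classical $L$-move braiding algorithm of \cite{LR1} verbatim to the underlying knot diagram, treating each pre-crossing as an ordinary (rigid) down-going double point that the algorithm simply passes over. Concretely, after choosing a base-point and running along the oriented diagram, I would subdivide every up-arc so that each smaller arc contains crossings of a single type, label it ``o'' or ``u'' accordingly, and perform the corresponding $o$- or $u$-braiding move (Figure~\ref{ahg}). Because the up-arcs are now disjoint from every pre-crossing, each braiding move is an ordinary $L$-move carried out in a region free of pre-crossings; its effect on the closure is therefore a genuine isotopy exactly as in the classical case, and it manifestly preserves the pre-crossings as down-going double points of the resulting braided diagram.

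\emph{Step 3 (Termination and closure).} I would then argue termination: there are finitely many up-arcs, and each braiding move strictly reduces their number without creating new ones, so the process halts after finitely many steps and yields a diagram all of whose strands are descending. This is by definition a pseudo braid, and undoing the braiding moves shows that its closure is pseudo-isotopic to the starting diagram. The main obstacle I anticipate is not any single step but the verification in Step~2 that the braiding moves genuinely do not interact with the reoriented pre-crossings: one must check that the triangular regions in which the $L$-moves act can always be chosen to avoid the pre-crossing neighborhoods, and that the final contraction realizing the closure does not force a pre-crossing back into an up-position. This is exactly the point where the difference from the singular case matters --- here an inbox pre-crossing created during an $L$-move can be removed via PR1 (recall Figure~\ref{plm}), whereas a singular inbox crossing could not --- so I would emphasize that the algorithm's correctness for pseudo knots follows precisely because PR1 is available, completing the proof.
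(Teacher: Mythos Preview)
Your approach is essentially the paper's own: the paper simply declares that ``the above algorithm provides a proof,'' and your Steps~1--3 unpack precisely that algorithm (rotate pre-crossings to down-arcs as in \cite{Bi,PR}, then apply the \cite{LR1} braiding moves to the remaining up-arcs). In that sense the proposal is correct and matches the paper.

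One correction to your final paragraph, however: you claim that ``the algorithm's correctness for pseudo knots follows precisely because PR1 is available.'' This is not right. The braiding algorithm here is \emph{identical} to the one for singular knots, and Birman's Alexander theorem for singular links holds without any analogue of R1 for singular crossings. Once Step~1 has pushed every pre-crossing onto down-arcs, the braiding moves of Step~2 act only on up-arcs and never touch a pre-crossing; no inbox pre-crossing is ever created, and PR1 plays no role whatsoever in the Alexander theorem. The availability of PR1 (and the associated inbox pre-crossing in an $L$-move, Figure~\ref{plm}) is what distinguishes the pseudo and singular theories at the level of the \emph{Markov} theorem---it is what permits pseudo-stabilization---not at the level of braiding. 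So your obstacle in Step~2 is a non-issue resolved already by Step~1, and the appeal to PR1 should simply be deleted.
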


\subsection{$L$-move equivalence for pseudo braids}\label{lmpb}

As shown in \cite{LR1} (see also \cite{La} for the case of singular braids), the $L$-moves can realize (real) conjugation, while (real) stabilization moves are special cases of $L$-moves. Furthermore, an $L$-move on a pseudo braid that introduces a pre-crossing (recall Figure~\ref{plm}) has pseudo-stabilization as a special case. Finally, $L$-moves cannot achieve commuting by a pre-crossing (see Figure~\ref{pcsing}),
which is allowed in the theory.

\begin{figure}[ht]
\begin{center}
\includegraphics[width=3.4in]{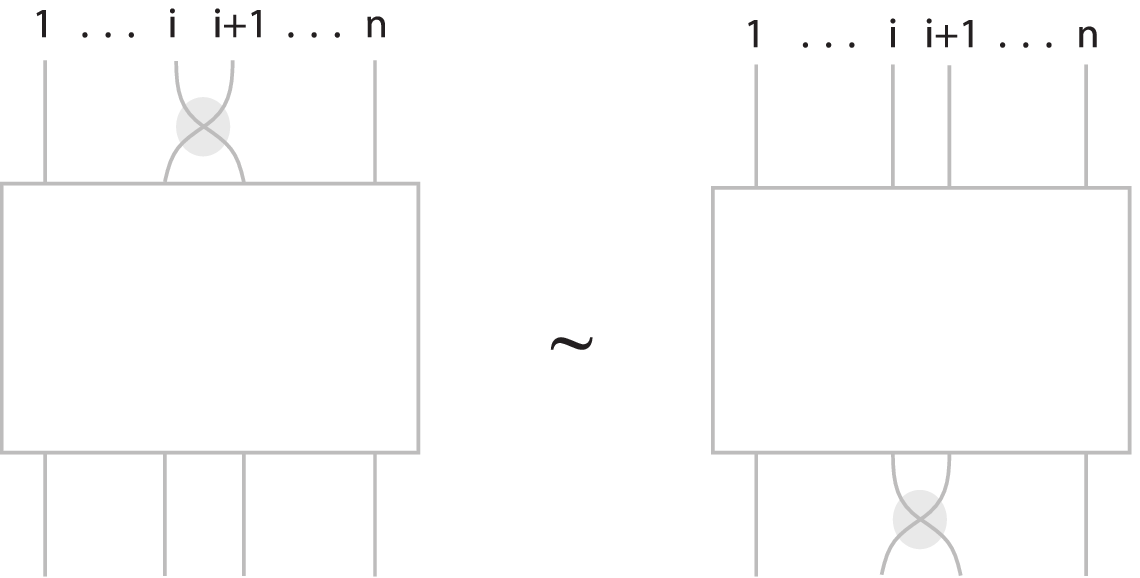}
\end{center}
\caption{Braiding moves for up-arcs.}
\label{pcsing}
\end{figure}

From the discussion above, it follows that we may replace moves 2, 3 and 4 in Theorem~\ref{markpl}, by $L$-moves and we obtain the following:

\begin{thm}[{\bf $L$-move equivalence for pseudo braids}] \label{lmarkpl}
Two pseudo braids have isotopic closures if and only if one can  obtained from the other by a finite sequence of the following moves:
\[
\begin{array}{lllcll}
1. & {L-moves}\ &   &  &   & \\
&&&&&\\
2. & {Commuting:} &  \alpha\, p_i & \sim & p_i\, \alpha, & {\rm for}\ \alpha,\, \beta \in PM_n.\\
\end{array}
\]
\end{thm}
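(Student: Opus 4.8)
The plan is to derive this theorem directly from the analogue of the Markov theorem for pseudo braids (Theorem~\ref{markpl}), by showing that the two equivalence relations coincide: the one generated by the four moves of Theorem~\ref{markpl}, and the one generated by $L$-moves together with pre-crossing commuting. Since Theorem~\ref{markpl} already characterises isotopy of closures, it suffices to prove that every generating move of each list can be realised by a finite sequence of moves from the other list. I would organise the argument into a sufficiency (``if'') direction and a necessity (``only if'') direction.

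For the sufficiency direction, I would argue that both new moves preserve the isotopy class of the closure. An $L$-move on a pseudo braid is, by construction, the braid-level counterpart of a local diagram isotopy in the sense of \cite{LR1}, so performing one leaves the closure unchanged up to isotopy; equivalently, each $L$-move decomposes into a (real) stabilization and a (real) conjugation, both present in Theorem~\ref{markpl}, while an $L$-move that introduces a pre-crossing (Figure~\ref{plm}) additionally uses pseudo-stabilization. The pre-crossing commuting move $\alpha\, p_i \sim p_i\, \alpha$ is a special case of the general commuting move of Theorem~\ref{markpl}. Hence both moves of the new list lie in the equivalence relation of Theorem~\ref{markpl}.

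For the necessity direction I would show that each of the four moves of Theorem~\ref{markpl} lies in the relation generated by $L$-moves and pre-crossing commuting. Conjugation by a classical braid $\beta \in B_n$ is realised by $L$-moves exactly as in the classical case \cite{LR1} and the singular case \cite{La}, because the strands created by an $L$-move cross the rest of the braid only with real crossings. Real stabilization $\alpha \sim \alpha\, \sigma_n^{\pm 1}$ is the special case of an $L$-move whose contraction introduces a single real crossing, and pseudo-stabilization $\alpha \sim \alpha\, p_n$ is the special case of an $L$-move whose contraction introduces an inbox pre-crossing (the phenomenon of Figure~\ref{plm}, available precisely because move PR1 is allowed for pseudo knots). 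It remains to recover general commuting $\alpha\,\beta \sim \beta\,\alpha$ for $\alpha,\beta \in PM_n$: writing $\beta$ as a word in the generators $\sigma_i^{\pm 1}$ and $p_i$ and inducting on its length, I would reduce general commuting to commuting past a single generator, where commuting past $\sigma_i^{\pm 1}$ is conjugation by a classical braid (hence an $L$-move) and commuting past $p_i$ is exactly move~2. This reduction is transparent on the closure, where $\alpha\beta$ and $\beta\alpha$ are the same circular diagram read from different base points.

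The main obstacle I anticipate is the commuting reduction together with the verification that pre-crossing commuting genuinely cannot be absorbed into the $L$-moves, so that move~2 is not redundant: since an $L$-move only ever creates strands crossing with real crossings, it can never slide a pre-crossing around the closure (compare Figure~\ref{pcsing}). Making the inductive peeling-off of generators rigorous — in particular checking that each intermediate rearrangement is legitimately either a conjugation or stabilization (hence an $L$-move) or else a pre-crossing commuting — is the delicate bookkeeping step. Once this is in place, combining the two directions with Theorem~\ref{markpl} yields the claimed equivalence.
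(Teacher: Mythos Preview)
Your proposal is correct and follows essentially the same approach as the paper: both reduce Theorem~\ref{markpl} to the $L$-move statement by invoking \cite{LR1, La} for conjugation and real stabilization, observing that pseudo-stabilization is the special case of an $L$-move introducing an inbox pre-crossing (Figure~\ref{plm}), and retaining pre-crossing commuting as the one move $L$-moves cannot absorb (Figure~\ref{pcsing}). Your treatment is in fact more careful than the paper's on one point: the paper simply asserts that moves~2, 3, 4 of Theorem~\ref{markpl} can be replaced by $L$-moves, whereas you explicitly reduce the general commuting move $\alpha\beta \sim \beta\alpha$ (for $\alpha,\beta \in PM_n$) to single-generator commuting by induction on word length, separating the $\sigma_i^{\pm 1}$ case (conjugation, hence $L$-moves) from the $p_i$ case (your move~2).
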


\section{Tied Pseudo links \& Tied Singular links}\label{tpl}

In this section we introduce and study {\it tied pseudo links} and {\it tied pseudo braids} with the use of tied singular links and tied singular braids, introduced and studied in \cite{AJ3}. We also introduce the tied pseudo braid monoid and we conclude by proving the analogue of the Alexander and Markov theorems for tied pseudo links.

\smallbreak

\begin{defn}\rm
A {\it tied singular link} is a singular link with ties, while a {\it tied pseudo knot} is a pseudo knot equipped with ties.
\end{defn}

\begin{defn}\label{tsmn}\rm
The {\it tied singular braid monoid} $TSM_n$ is defined as the monoid generated by the standard braid generators $\sigma_i^{\pm 1}$'s, the singular braiding generators $\tau_{i}$'s of $SM_n$ and the ties generators $\eta_i$'s of $TM_n$ satisfying the defining relations of $SM_n$, the defining relations of $TM_n$ together with the following relations:

\[
\begin{array}{rcll}
\tau_i\, \eta_i & = & \eta_i\, \tau_i, & {\rm for\ all}\ i,\\
&&&\\
\tau_i\, \eta_j & = & \eta_j\, \tau_i, & {\rm for}\ |i-j|\geq 2,\\
&&&\\
\eta_i\, \tau_j\, \tau_i & = & \tau_j\, \tau_i\, \eta_j, & {\rm for}\ |i-j|=1,\\
&&&\\
\eta_i\, \eta_j\, \tau_i & = & \eta_j\, \tau_i\, \eta_j \ =\ \tau_i\, \eta_i\, \eta_j, & {\rm for}\ |i-j|=1,\\
&&&\\
\eta_i\, \tau_j\, \sigma_i & = & \tau_j\, \sigma_i\, \eta_j, & {\rm for}\ |i-j|=1,\\
&&&\\
\eta_i\, \sigma_j\, \tau_i & = & \sigma_j\, \tau_i\, \eta_j, & {\rm for}\ |i-j|=1,\\
&&&\\
\tau_i\, \eta_j & = & \sigma_i\, \eta_j\, \sigma_i^{-1}\, \tau_i, & {\rm for}\ |i-j|=1.\\
\end{array}
\]
\end{defn}

To define the monoid of tied pseudo braids $PM_n$, we consider the generators $\sigma_i^{\pm 1}$ of the braid group $B_n$, the ties $\eta_i$ of $TM_n$ and we add the generators $p_i$, replacing the $\tau_i$'s. We need to find the defining relations of this monoid and for this reason we analyze the pseudo-Reidemeister moves and obtain the pseudo braid monoid (see \cite{BJW}) and we also analyze relations concerning ties and pre-crossings, leading to the following definition:

\begin{defn}\label{tpmn}\rm
The {\it tied pseudo braid monoid} $TPM_n$ is defined as the monoid generated by the standard braid generators $\sigma_i^{\pm 1}$'s, the pseudo-generators $p_{i}$'s of $SM_n$ and the ties generators $\eta_i$'s of $TM_n$ satisfying the defining relations of $PM_n$, the defining relations of $TM_n$ together with the following relations:

\[
\begin{array}{rcll}
p_i\, \eta_i & = & \eta_i\, p_i, & {\rm for\ all}\ i,\\
&&&\\
p_i\, \eta_j & = & \eta_j\, p_i, & {\rm for}\ |i-j|\geq 2,\\
&&&\\
\eta_i\, p_j\, p_i & = & p_j\, p_i\, \eta_j, & {\rm for}\ |i-j|=1,\\
&&&\\
\eta_i\, \eta_j\, p_i & = & \eta_j\, p_i\, \eta_j \ =\ p_i\, \eta_i\, \eta_j, & {\rm for}\ |i-j|=1,\\
\end{array}
\]

\[
\begin{array}{rcll}
\eta_i\, p_j\, \sigma_i & = & p_j\, \sigma_i\, \eta_j, & {\rm for}\ |i-j|=1,\\
&&&\\
\eta_i\, \sigma_j\, p_i & = & \sigma_j\, p_i\, \eta_j, & {\rm for}\ |i-j|=1,\\
&&&\\
p_i\, \eta_j & = & \sigma_i\, \eta_j\, \sigma_i^{-1}\, p_i, & {\rm for}\ |i-j|=1.\\
\end{array}
\]
\end{defn}

Comparing Definitions~\ref{tsmn} \& \ref{tpmn}, we have the following:

\begin{thm}\label{is1}\rm
There exists an isomorphism $\mu$ from the tied singular braid monoid to the tied pseudo braid monoid, defined as follows:
\bigbreak
\begin{equation}\label{iso}
\begin{array}{rccl}
\mu\, : & TSM_n & \rightarrow & TPM_n\\
&&&\\
        &  \sigma_i^{\pm 1} & \mapsto & \sigma_i^{\pm 1}\\
				&  \tau_i           & \mapsto & p_i\\
\end{array}
\end{equation}
\end{thm}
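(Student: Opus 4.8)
The plan is to prove that $\mu$ is an isomorphism by a presentation-matching argument: I will show that $\mu$ carries the defining relations of $TSM_n$ bijectively onto those of $TPM_n$, so that both $\mu$ and its obvious inverse are well-defined monoid homomorphisms that compose to the identity on generators.

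First I would record that $\mu$ must also send $\eta_i \mapsto \eta_i$ (the ties are sent to ties), so that $\mu$ is specified on the full generating set $\sigma_i^{\pm 1}, \tau_i, \eta_i$; this is implicit in the displayed equation. To verify that $\mu$ is a well-defined homomorphism, it suffices, by the universal property of a monoid presentation, to check that every defining relation of $TSM_n$ becomes a valid relation of $TPM_n$ after applying $\mu$. The defining relations of $TSM_n$ split into three families: (a) the relations of $SM_n$ among the $\sigma_i^{\pm 1}$ and $\tau_i$; (b) the relations of $TM_n$ among the $\sigma_i^{\pm 1}$ and $\eta_i$; and (c) the mixed relations listed in Definition~\ref{tsmn}.

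For family (a), I would invoke Proposition~\ref{propb1}, which supplies an isomorphism $SM_n \cong PM_n$ given precisely by $\sigma_i \mapsto \sigma_i$ and $\tau_i \mapsto p_i$; hence each $SM_n$-relation is carried to a defining relation of $PM_n$, which holds in $TPM_n$. Family (b) is mapped to itself, since the $TM_n$-relations occur verbatim among the defining relations of $TPM_n$. For family (c), a line-by-line comparison of Definitions~\ref{tsmn} and~\ref{tpmn} shows that the mixed relations of $TPM_n$ are obtained from those of $TSM_n$ by the substitution $\tau_i \to p_i$; thus $\mu$ sends each mixed relation of $TSM_n$ to the corresponding mixed relation of $TPM_n$. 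This establishes that $\mu$ is a well-defined monoid homomorphism.

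Finally I would define $\mu^{-1}\colon TPM_n \to TSM_n$ on generators by $\sigma_i^{\pm 1} \mapsto \sigma_i^{\pm 1}$, $p_i \mapsto \tau_i$, $\eta_i \mapsto \eta_i$, and run the symmetric argument to see that it too respects all defining relations, using the inverse isomorphism of Proposition~\ref{propb1} for family (a). Since $\mu \circ \mu^{-1}$ and $\mu^{-1}\circ \mu$ fix every generator, they are the identity homomorphisms, so $\mu$ is an isomorphism. The only point requiring genuine care is the exact correspondence in family (c): one must confirm that no mixed relation appears in one presentation without its counterpart in the other, and that all indices and exponents match under the substitution $\tau_i \to p_i$. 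Given the displayed relations this verification is routine, but it is the crux of the argument and should not simply be assumed.
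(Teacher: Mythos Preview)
Your proposal is correct and follows exactly the approach the paper intends: the paper offers no formal proof beyond the remark ``Comparing Definitions~\ref{tsmn} \& \ref{tpmn}, we have the following,'' so the implicit argument is precisely the presentation-matching verification you have written out. Your version is more detailed than the paper's treatment, and your observation that $\mu$ must also send $\eta_i \mapsto \eta_i$ is a genuine clarification of the statement.
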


Moreover, in Theorem~9 \cite{AJ3} it is shown that the monoid $TSM_n$ is isomorphic to $P_n \rtimes SM_n$, where $P_n$ is the monoid generated by the set of partitions generated by the ties. This separates the ties from the rest of the singular braid, i.e. in a tied singular braid one can bring the ties at the top or bottom of the tied singular braid. Since now $TPM_n$ is isomorphic to $TSM_n$ and $TM_n$ is isomorphic to $SM_n$, it is natural to consider $TPM_n$ to be isomorphic to $P_n \rtimes TM_n$. A proof of this fact is obtained by recalling that generalized ties $\eta_{i, j}$ are transparent with respect to all strands between the $i^{th}$ and $j^{th}$ strands, and that generalized ties are provided with elasticity.

\smallbreak

Finally, the generalized ties satisfy the following relations (recall Eq.~\ref{eqf}):

\begin{equation}\label{eqf1}
\begin{array}{crclcrcl}
(i) & p_i\, \eta_{i, j} & = & \eta_{i+1}\, p_i, & (ii) & p_j\, \eta_{i, j} & = & \eta_{i, j+1}\, p_i,\\
&&&&&&&\\
(iii) & p_{i-1}\, \eta_{i, j} & = & \eta_{i-1, j}\, p_{i-1}, & (iv) & p_{j-1}\, \eta_{i, j} & = & \eta_{i, j-1}\, p_{j-1}, \\
\end{array}
\end{equation}

\noindent which, together with the transparent property \& elasticity of ties, lead to the following results:

\begin{thm}\label{mobtpb}
Let $\alpha$ be a tied pseudo braid in $TPM_n$. Then, $\alpha$ can be written as $\alpha\, =\, \gamma\, \beta$ (or $\alpha\, =\, \beta\, \gamma$), where $\gamma\in PM_n$ and $\beta\in TPM_n^{\sim}$, where $TPM_n^{\sim}$ denotes the subset of $TPM_n$ generated by the ties.
\end{thm}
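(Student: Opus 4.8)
The plan is to establish this mobility property by sorting any word that represents $\alpha$ so that every tie generator is pushed below every braid and pseudo generator, mirroring the proof of the mobility property for tied links in \cite{AJ1} (Proposition~\ref{propaj}i.) but now also accounting for the pre-crossing generators $p_i$. First I would fix a word $w$ in the generators $\sigma_i^{\pm1}$, $p_i$, $\eta_i$ representing $\alpha$ and introduce a complexity measure, namely the number of \emph{inversions} of $w$: the number of pairs consisting of a non-tie generator ($\sigma_i^{\pm1}$ or $p_i$) together with a tie generator occurring to its right. Reducing this number to zero places all ties at the bottom of the braid and yields $\alpha=\gamma\beta$ with $\gamma\in PM_n$ and $\beta\in TPM_n^{\sim}$.

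The core step is a single commutation move: given a non-tie generator immediately followed by a (generalized) tie $\eta_{a,b}$, I would rewrite this subword using the defining relations so that the tie moves one position to the right while the non-tie generator is preserved. For crossings this is governed by the relations of $TM_n$ (Definition~\ref{montls3}) together with Equations~\ref{eqf}, and for pre-crossings by the relations of Definition~\ref{tpmn} together with Equations~\ref{eqf1}; in every case the outcome should be a single generalized tie $\eta_{a',b'}$ with shifted indices, followed by the original non-tie generator. Since each such move strictly decreases the inversion count, after finitely many steps the word is sorted, and the symmetric argument of pushing ties upward gives the alternative form $\alpha=\beta\gamma$.

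The hard part is that some commutations, for instance $p_i\,\eta_j=\sigma_i\,\eta_j\,\sigma_i^{-1}\,p_i$ for $|i-j|=1$, appear to introduce extra braid generators $\sigma_i,\sigma_i^{-1}$, which could create new inversions and obstruct termination. The resolution is precisely why the transparency and elasticity of the generalized ties were recalled before the statement: passing to the generalized ties $\eta_{a,b}$ of Equation~\ref{genties}, the conjugating factors $\sigma_i\cdots\sigma_i^{-1}$ are absorbed into the index shift of the tie, so that Equations~\ref{eqf} and~\ref{eqf1} genuinely move one generalized tie past one generator with no braid generators left over, keeping the inversion count monotone. I expect verifying this absorption uniformly across all the mixed tie/pre-crossing relations of Definition~\ref{tpmn} to be the only delicate point of the argument.

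Finally, I would record that the same conclusion also follows conceptually from the structural result surrounding the statement: transporting the decomposition $TSM_n\cong P_n\rtimes SM_n$ of \cite{AJ3} through the isomorphism $\mu$ of Theorem~\ref{is1} yields $TPM_n\cong P_n\rtimes PM_n$, and in any such semidirect product every element factors as a pseudo-braid part times a ties part, which is exactly the asserted factorization $\alpha=\gamma\beta$ (equivalently $\alpha=\beta\gamma$). This gives an independent, purely algebraic confirmation that the sorting procedure terminates correctly.
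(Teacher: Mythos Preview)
Your proposal is correct and takes essentially the same approach as the paper: the paper simply asserts that Equations~\ref{eqf} and~\ref{eqf1}, together with the transparency and elasticity of the generalized ties, allow all ties to be slid to the bottom (or top) of the braid, and it likewise points to the semidirect-product decomposition transported via the isomorphism $\mu$ of Theorem~\ref{is1} as an alternative justification. Your inversion-count sorting argument and your identification of how the conjugating factors $\sigma_i\cdots\sigma_i^{-1}$ are absorbed into the index shift of the generalized tie make explicit precisely the mechanism the paper leaves implicit.
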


Moreover:

\begin{prop}\rm \label{propaj1}
Let $\alpha_1=\beta_1\, \gamma_1,\ \alpha_2=\beta_2\, \gamma_2$ be two tied pseudo braids in $TPM_n$. Then, $\alpha_1\, \sim\, \alpha_2$, if and only if $\beta_1\, \sim\, \beta_2$ in $TM_n$ and $\gamma_1, \gamma_2$ define the same partition of the set of the strands. 
\end{prop}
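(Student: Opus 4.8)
The plan is to deduce the proposition from the semidirect‑product decomposition $TPM_n\cong P_n\rtimes PM_n$ recorded above, which follows by transporting the decomposition $TSM_n\cong P_n\rtimes SM_n$ of \cite{AJ3} through the isomorphism $\mu$ of Theorem~\ref{is1} and the isomorphism $PM_n\cong SM_n$ of Proposition~\ref{propb1}. Here $\beta_i\in PM_n$ is the underlying pseudo braid and $\gamma_i$ is the generalized‑tie factor produced by the mobility property (Theorem~\ref{mobtpb}); since relation $(v)$ of Equation~\ref{eqf} together with the idempotency $\eta_i^2=\eta_i$ of Definition~\ref{montls3} identify the tie submonoid $TPM_n^{\sim}$ with the partition monoid $P_n$, two tie‑words agree in $TPM_n^{\sim}$ exactly when they define the same partition.

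For $(\Leftarrow)$, the defining relations of $PM_n$ are among those of $TPM_n$, so $\beta_1\sim\beta_2$ in $PM_n$ gives $\beta_1\gamma_1\sim\beta_2\gamma_1$; and $\gamma_1,\gamma_2$ defining the same partition means $\gamma_1=\gamma_2$ in $TPM_n^{\sim}$, whence $\beta_2\gamma_1\sim\beta_2\gamma_2$, and composing yields $\alpha_1\sim\alpha_2$. For $(\Rightarrow)$ I would first apply the forget‑ties homomorphism $\phi\colon TPM_n\to PM_n$, $\eta_i\mapsto 1$, $\sigma_i^{\pm1}\mapsto\sigma_i^{\pm1}$, $p_i\mapsto p_i$ (well defined because each tie relation of Definition~\ref{tpmn} collapses to a genuine relation of $PM_n$); since $\phi(\beta_i\gamma_i)=\beta_i$, the hypothesis $\alpha_1\sim\alpha_2$ gives at once $\beta_1\sim\beta_2$ in $PM_n$. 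For the partition half, I would read $\alpha_1\sim\alpha_2$ in the faithful pairing $P_n\rtimes PM_n$: the $PM_n$‑coordinates are the already‑equal classes $\beta_1=\beta_2$, so equality of the two pairs forces their $P_n$‑coordinates to coincide; as these coordinates are the partitions carried by $\gamma_1$ and $\gamma_2$ (the common braid $\beta_1$ relabels both in the same way), we conclude that $\gamma_1$ and $\gamma_2$ define the same partition. Equivalently, one can replace $\beta_2$ by $\beta_1$ in $\alpha_2$, so $\beta_1\gamma_1\sim\beta_1\gamma_2$, and cancel the braid factor using the group embedding $PM_n\cong SM_n\hookrightarrow SB_n$ of Remark~\ref{propb2}.

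The main obstacle is exactly this separation of the partition invariant in $(\Rightarrow)$. There is no naive homomorphism $TPM_n\to P_n$ extracting the partition while forgetting the braid, because the relation $\eta_i\,\sigma_j\,\sigma_i^{\pm1}=\sigma_j\,\sigma_i^{\pm1}\,\eta_j$ (for $|i-j|=1$) shows that sliding a tie past crossings changes which strands it joins; the partition of the tie‑factor is only well defined relative to a fixed braid part. Thus the crux is the faithfulness of the semidirect decomposition $TPM_n\cong P_n\rtimes PM_n$ — equivalently, the essential uniqueness of the mobility decomposition of Theorem~\ref{mobtpb} — and once this is secured (via $\mu$ and the \cite{AJ3} structure theorem, using the transparency and elasticity of the generalized ties) both implications follow formally.
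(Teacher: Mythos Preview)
Your proposal is correct and follows essentially the same route as the paper: the paper does not spell out a detailed proof of Proposition~\ref{propaj1} but derives it (together with Theorem~\ref{mobtpb}) from the semidirect-product decomposition $TPM_n\cong P_n\rtimes PM_n$, obtained by transporting $TSM_n\cong P_n\rtimes SM_n$ from \cite{AJ3} through the isomorphism $\mu$ of Theorem~\ref{is1} and $PM_n\cong SM_n$ (Proposition~\ref{propb1}). Your explicit use of the forget-ties homomorphism and the uniqueness of the $P_n\rtimes PM_n$ coordinates just makes the paper's implicit argument precise; note also that the ``$TM_n$'' in the statement is a typo for $PM_n$, which you have correctly read.
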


\subsection{Braiding tied pseudo links}\label{tpbal}

Define now the {\it closure} of a tied pseudo braid as in the classical case by ignoring the ties and the pre-crossings. For an illustration see Figure~\ref{cl}.

\begin{figure}[ht]
\begin{center}
\includegraphics[width=2.9in]{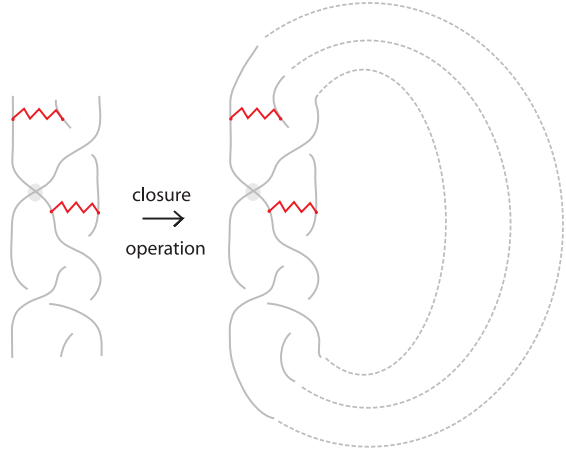}
\end{center}
\caption{The closure operation for tied pseudo braids.}
\label{cl}
\end{figure}

We are now in position to prove the analogue of the Alexander theorem for tied pseudo braids.

\begin{thm}[{\bf The analogue of the Alexander theorem for tied pseudo links}]\label{alexthmtpl}
Every oriented tied pseudo link is equivalent to the closure of a tied pseudo braid.
\end{thm}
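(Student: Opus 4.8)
The plan is to run the $L$-move braiding algorithm for pseudo links of \S\ref{alse} on the underlying pseudo link while carrying the ties along, exactly in the spirit of the proof of the analogue of the Alexander theorem for tied links (Theorem~\ref{alextls3}, \cite{AJ1}). Concretely, let $L(P)$ be an oriented tied pseudo link diagram, with $L$ its underlying pseudo link and $P$ its set of ties. I would first forget the ties and carry out the preparatory step of \S\ref{alse}: rotate every pre-crossing that contains at least one up-arc so that both of its arcs point downward. After this isotopy every pre-crossing involves only down-arcs and is therefore left untouched by the subsequent braiding moves.

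Next I would apply the braiding algorithm of \S\ref{alse} to the underlying diagram, that is, subdivide the up-arcs, label each with ``o'' or ``u'' according to the crossings it carries, and replace it by the corresponding $o$- or $u$-braiding move. By Theorem~\ref{newprpkalex} this yields a pseudo braid whose closure is isotopic to $L$. It then remains to account for the ties. Since the ties are non-embedded, transparent and equipped with elasticity (recall the discussion preceding Theorem~\ref{mobtpb} and Equations~\ref{eqf1}), an endpoint of a tie can be slid freely along its component and a tie can be stretched across the newly created braid strands without obstruction. Thus, before braiding an up-arc, I would slide any tie endpoint lying on it onto an adjacent down-arc; the tie-equivalence is unchanged because a tie may be introduced or deleted at will between points of the same component. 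The braiding moves then act only on tie-free arcs, and once all up-arcs have been braided we recover a genuine tied pseudo braid $\alpha \in TPM_n$.

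Finally, I would verify that the closure of $\alpha$ is tie-isotopic to $L(P)$. The underlying braid closure is isotopic to $L$ by the previous step, so it only remains to check that the ties of $\alpha$ induce, after closure, the same partition of the components of $L$ as the original set $P$. Here one uses that the closure operation identifies all strands belonging to a given component, together with the mobility property (Theorem~\ref{mobtpb}), which lets us collect the ties and rewrite them as generalized ties $\eta_{i,j}$ without altering the partition they define on the strands.

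The main obstacle is precisely this last bookkeeping step: a single component of $L$ is in general cut into several strands of $\alpha$ by the braiding moves, so a tie joining two components of $L$ must be tracked to a tie (or a generalized tie) joining two strands of $\alpha$ whose closures lie on those components, with no spurious creation or destruction of classes. Controlling this is exactly what the transparency and elasticity of the generalized ties, Equations~\ref{eqf1}, and Proposition~\ref{propaj1} are designed to handle, so the argument reduces to checking that each individual braiding move, together with each rotation of a pre-crossing, preserves the tie-induced partition on the nose.
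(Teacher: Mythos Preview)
Your proposal is correct and follows essentially the same approach as the paper: the paper's proof is a one-line appeal to the braiding algorithm of \S\ref{lmovepk} together with the mobility of ties along strands, and your argument is a careful unpacking of exactly that. Your explicit step of sliding tie endpoints off up-arcs before braiding, and your final bookkeeping via transparency, elasticity and Equations~\ref{eqf1}, are precisely the content hidden behind the phrase ``ties can freely move along strands'', so there is no genuine difference in strategy.
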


\begin{proof}
The proof follows by using the braiding algorithm of \S~\ref{lmovepk} and the fact that ties can freely move along strands of the pseudo links (see also \cite{D} for braiding algorithms for tied links in other c.c.o. 3-manifolds).
\end{proof}

\begin{remark}\rm
An alternative proof follows in a similar way as in the proof of Theorem~3.1 in \cite{BJW}. More precisely, let $L$ be a tied pseudo link and let $f$ be the natural map from the tied singular links to tied pseudo links. Consider the tied singular link $L^{\prime}$ in $f^{-1}(L)$. From Theorem~10 in \cite{AJ3} we have that there exists a tied singular braid $\beta^{\prime}$, whose closure is equivalent to $L^{\prime}$. From Theorem~\ref{is1} and Eq.~\ref{iso}, there exist a tied pseudo braid $\beta$ whose closure is the tied pseudo link $L$.
\end{remark}

\subsection{Braid equivalence for tied pseudo braids}\label{mthmtpb}

We begin this subsection by recalling the analogue of the Markov theorem for tied singular braids from \cite{AJ3}.

\begin{thm}[{\bf The analogue of the Markov theorem for tied singular braids}] \label{marktsl}
Two tied singular braids have equivalent closures if and only if one can  obtained from the other by a finite sequence of the following moves:

\[
\begin{array}{lllcll}
1^{\prime}. & {Conjugation:} &  \alpha\, \beta & \sim & \beta\, \alpha, & {\rm for\ all}\ \alpha \in TSM_n\ \&\ \beta\in TM_n \\
&&&&&\\
2^{\prime}. & {Commuting:} &  \alpha\, \tau_i & \sim & \tau_i \, \alpha, & {\rm for\ all}\ \alpha \in TSM_n,\\
&&&&&\\
3^{\prime}. & {Stabilization:} &  \alpha & \sim & \alpha\, \sigma_n^{\pm 1}, & {\rm for\ all}\ \alpha \in TSM_n,\\
&&&&&\\
4^{\prime}. & {t-stabilization:} &  \alpha & \sim & \alpha\, \eta_{i, j}, & {\rm for\ all}\ \alpha \in TSM_n\ {\rm such\ that}\\
\end{array}
\]
\smallbreak
\noindent $s_{\alpha}(i)=j$, {\rm where} $s_{\alpha}$ {denotes the permutation associated to the braid obtained from} $\alpha$ {by forgetting its ties.}
\end{thm}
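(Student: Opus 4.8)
The plan is to prove both implications by exploiting the semidirect product decomposition $TSM_n \cong P_n \rtimes SM_n$ (Theorem~9 of \cite{AJ3}), which separates the purely singular data of a tied singular braid from its tie-partition data and lets us treat each by the corresponding known braid equivalence. The soundness (``if'') direction is routine and I would dispatch it first: conjugation and singular commuting manifestly preserve the closure up to tie isotopy; stabilization is the classical Markov move on the underlying singular link and does not touch the partition; and $t$-stabilization $\alpha \sim \alpha\,\eta_{i,j}$ under the hypothesis $s_\alpha(i)=j$ introduces a tie between two strands belonging to the \emph{same} component of the closure, hence leaves the induced partition of components unchanged. This is exactly the tie-equivalence relation of Definition~\ref{istls3} read at the level of braids.

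For the completeness (``only if'') direction I would proceed as follows. By the analogue of the Alexander theorem for tied singular links (Theorem~10 of \cite{AJ3}) both tied singular links are closures of tied singular braids, and using the separation guaranteed by $P_n \rtimes SM_n$ I would write each as $\gamma\beta$, with $\gamma$ a product of generalized ties and $\beta \in SM_n$, sliding all ties to the top by means of relations analogous to Eq.~\ref{eqf} together with the transparency and elasticity of the generalized ties. Tie isotopy of the two closures then decouples into two conditions: (a) the underlying singular links (forgetting ties) are isotopic, and (b) the two tie families $\gamma_1, \gamma_2$ induce the same partition of the components.

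For condition (a) I would invoke the Markov theorem for singular braids (Theorem~\ref{mthsll}): the singular factors are connected by singular commuting (move $2'$) and $L$-moves, and since $L$-moves realize real conjugation and contain real stabilization as a special case, these translate into moves $1'$ and $3'$. The care needed is to carry the ties along during each such move; transparency and elasticity guarantee that an $L$-move or a conjugation can be performed without obstruction from a tie, so that each move lifts from $SM_n$ to $TSM_n$, with conjugation by pure ties (the part of move $1'$ with $\beta \in TM_n$ not coming from $B_n$) supplied separately by the tie structure.

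The main obstacle, and the crux of the argument, is reconciling the partition data in condition (b) by means of move $4'$ alone. Here I would argue as in the proof of the Markov theorem for tied braids (Theorem~\ref{marktls3}): once the singular factors have been identified, two families of generalized ties over the same singular braid define the same component-partition if and only if they are related by the tie relations of Definition~\ref{tsmn} together with introduction and deletion of ties $\eta_{i,j}$ joining strands with $s_\alpha(i)=j$, i.e. precisely by $t$-stabilizations and conjugations. Establishing this last equivalence --- that the partition relation on $P_n$ is generated exactly by the $4'$-moves admitted by the permutation condition --- is the delicate combinatorial step, since one must track how the stabilizations and conjugations performed for condition (a) alter the strand-permutation $s_\alpha$, and hence which $t$-stabilizations are admissible at each stage. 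Managing this bookkeeping, so that the reductions (a) and (b) are carried out compatibly rather than interfering, is where the real work lies.
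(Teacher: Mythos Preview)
The paper does not contain a proof of Theorem~\ref{marktsl}; it is merely \emph{recalled} from \cite{AJ3} (see the sentence preceding the theorem and the remark immediately following it, which states that ``Theorem~\ref{marktsl} is stated and proved in \cite{AJ3} in the context of combinatoric tied singular links''). So there is no proof in this paper to compare your proposal against.

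That said, your outline is entirely in line with the strategy the paper uses elsewhere for the analogous results. In particular, the alternative proof sketched in the remark following Theorem~\ref{marktpb} follows exactly your template: separate ties from the (pseudo/singular) braid via the mobility property (Theorem~\ref{mobtpb}, Proposition~\ref{propaj1}), reduce the underlying braid equivalence to the known singular Markov theorem, and then match the partitions via $t$-stabilizations. Your identification of the ``delicate combinatorial step'' is also accurate: the paper acknowledges this by pointing to the arguments in \cite{AJ1}, \cite{F}, and \cite{D} rather than carrying out the bookkeeping explicitly. So your proposal is sound and matches the methodology the paper attributes to \cite{AJ3}, but there is nothing further to compare here since the present paper defers the actual proof to that reference.
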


\begin{remark}\rm
It is worth mentioning that Theorem~\ref{marktsl} is stated and proved in \cite{AJ3} in the context of {\it combinatoric tied singular links}. Moreover, in this paper we prefer to distinguish conjugation of classical invertible generators, and commuting of non invertible generators, which are the singular crossings and the elementary ties.
\end{remark}

Note that from the discussion above it follows that moves $1^{\prime}$ and $3^{\prime}$ of Theorem~\ref{marktsl} can be replaced by the $L$-moves, where $L$-moves for tied braids are defined as in the classical case. This leads to an $L$-move braid equivalence theorem for tied singular braids:

\begin{thm}[{\bf $L$-move equivalence for tied singular braids}] \label{lmarktsl}
Two tied singular braids have equivalent closures if and only if one can  obtained from the other by a finite sequence of the following moves:
\[
\begin{array}{llcll}
{L-moves} &   &  &  & \\
&&&&\\
{Commuting:} &  \alpha\, \tau_i & \sim & \tau_i \, \alpha, & {\rm for\ all}\ \alpha \in TSM_n,\\
&&&&\\
{t-stabilization:} &  \alpha & \sim & \alpha\, \eta_{i, j}, & {\rm for\ all}\ \alpha \in TSM_n\ {\rm such\ that}\\
\end{array}
\]
\smallbreak
\noindent $s_{\alpha}(i)=j$, {\rm where} $s_{\alpha}$ {denotes the permutation associated to the braid obtained from} $\alpha$ {by forgetting its ties.}
\end{thm}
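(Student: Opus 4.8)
The plan is to deduce Theorem~\ref{lmarktsl} directly from Theorem~\ref{marktsl} by showing that the two move-lists generate the same equivalence relation on tied singular braids. The strategy is to establish a two-way reduction: first, that every move in the $L$-move list is realizable as a finite sequence of moves $1^{\prime}$--$4^{\prime}$, and conversely, that every move $1^{\prime}$--$4^{\prime}$ is realizable by a finite sequence of $L$-moves together with singular commuting and $t$-stabilization. Since moves $2^{\prime}$ and $4^{\prime}$ appear verbatim in both lists, the entire content reduces to the single claim that \emph{$L$-moves and the pair (conjugation~$1^{\prime}$, stabilization~$3^{\prime}$) generate the same equivalence relation}, once the non-invertible generators $\tau_i$ and $\eta_{i,j}$ are held fixed as separate moves.

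First I would recall, following \cite{LR1} (and \cite{La} for the singular setting), the three standard facts about $L$-moves in the presence of only real (invertible) crossings: (a) a real stabilization $\alpha \sim \alpha\,\sigma_n^{\pm 1}$ is a special case of an $L$-move, namely an $L$-move performed on the rightmost strand that introduces no inbox crossing; (b) conjugation by a braid generator, $\alpha \sim \sigma_i^{\pm 1}\,\alpha\,\sigma_i^{\mp 1}$, is realized by a pair of $L$-moves (an $L_o$ followed by the inverse contraction), and hence full conjugation $\alpha\,\beta \sim \beta\,\alpha$ for $\beta \in B_n$ follows by iteration; and (c) conversely every $L$-move can be resolved, up to braid isotopy, into a sequence of conjugations and stabilizations. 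These are exactly the equivalences proved in the classical and singular cases, and the point is that the $L$-move only ever creates \emph{real} crossings with the rest of the braid, so the verification is insensitive to the presence of the singular crossings $\tau_i$ sitting elsewhere in the braid word.

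The step requiring genuine care is handling the ties under these manipulations, since an $L$-move slides a strand across the whole braid and must interact correctly with the tie generators. Here I would invoke the mobility/separation structure: by Theorem~9 of \cite{AJ3}, $TSM_n \cong P_n \rtimes SM_n$, so any tied singular braid decomposes as $\alpha = \beta\,\gamma$ with $\gamma$ a product of generalized ties that may be pushed to the bottom. The key observation is that conjugation move $1^{\prime}$ is stated for $\beta \in TM_n$ (hence including tie generators), so to replace it by $L$-moves I must show that conjugation by an elementary tie $\eta_i$ is generated by the remaining moves. This is where the relations of Definition~\ref{tsmn} and Equations~\ref{eqf} enter: using transparency, elasticity, and the commuting relations $\eta_i\,\tau_j\,\tau_i = \tau_j\,\tau_i\,\eta_j$, a conjugation by a tie can be converted, modulo $t$-stabilization and singular commuting, into a conjugation by a real braid word, which is then covered by the $L$-moves. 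I expect this reconciliation of tie-conjugation with the $L$-move framework to be the main obstacle, precisely because ties are non-invertible and so ``conjugation by a tie'' is not literally an inner automorphism; the remark after Theorem~\ref{marktsl} about distinguishing conjugation of invertible generators from commuting of non-invertible ones is the signpost for how this must be organized.

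Once these reductions are in place, the proof concludes by a straightforward bookkeeping argument: starting from two tied singular braids with equivalent closures, apply Theorem~\ref{marktsl} to obtain a sequence of moves $1^{\prime}$--$4^{\prime}$ connecting them, then replace each instance of $1^{\prime}$ and $3^{\prime}$ by its $L$-move realization from the reductions above, leaving the instances of $2^{\prime}$ and $4^{\prime}$ untouched; the result is a sequence of $L$-moves, singular commuting, and $t$-stabilizations, which is exactly the $L$-move equivalence. The converse direction, that $L$-move equivalence implies equivalent closures, follows immediately since each $L$-move preserves the isotopy type of the closure (an $L$-move is, by construction, a composition of a Reidemeister-type cut-and-slide that does not change the closed-up diagram), and singular commuting and $t$-stabilization are already known to preserve closures by Theorem~\ref{marktsl}.
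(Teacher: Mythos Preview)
Your proposal is correct and follows the same approach as the paper, namely deducing Theorem~\ref{lmarktsl} from Theorem~\ref{marktsl} by replacing moves~$1^{\prime}$ and~$3^{\prime}$ with $L$-moves while keeping~$2^{\prime}$ and~$4^{\prime}$ intact. The paper in fact does not provide a separate proof at all; it simply states that ``from the discussion above it follows that moves~$1^{\prime}$ and~$3^{\prime}$ of Theorem~\ref{marktsl} can be replaced by the $L$-moves,'' referring to the earlier observations in~\S\ref{lmpb} that $L$-moves realize real conjugation and that real stabilization is a special case of an $L$-move.

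You are actually more careful than the paper on one point: you correctly notice that move~$1^{\prime}$ is stated for $\beta\in TM_n$, hence includes commuting by the non-invertible tie generators~$\eta_i$, and that this is not literally covered by the classical statement ``$L$-moves realize conjugation.'' Your proposed resolution via mobility and $t$-stabilization is the right one (concretely: pushing $\eta_i$ through $\alpha$ via Equations~\ref{eqf} yields $\eta_i\alpha=\alpha\,\eta_{s_\alpha(i),s_\alpha(i+1)}$ in the monoid, and then a short sequence of $t$-stabilizations together with relation~(v) of~\eqref{eqf} converts $\alpha\,\eta_{s_\alpha(i),s_\alpha(i+1)}$ to $\alpha\,\eta_i$). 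The paper glosses over this, implicitly absorbing it into the presence of~$4^{\prime}$ in the surviving move-list. So your argument is the same route, just with the tie-commuting step made explicit rather than left to the reader.
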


As noted before, we may consider the theory of tied pseudo links as the quotient of the theory of tied singular links by the pseudo-Reidemeister move 1. Thus, in order to obtain a braid equivalence theorem for tied pseudo braids, we need to consider moves of Theorem~\ref{marktsl}, together with the {\it pseudo-stabilization} moves of Theorem~\ref{markpl}.

\begin{thm}[{\bf The analogue of the Markov theorem for tied pseudo braids}] \label{marktpb}
Two tied pseudo braids have equivalent closures if and only if one can  obtained from the other by a finite sequence of the following moves:

\[
\begin{array}{lllcll}
1^{\prime \prime}. & {Commuting:} &  \alpha\, p_i & \sim & p_i\, \alpha, & {\rm for\ all}\ \alpha\, \in TPM_n,\\
&&&&&\\
2^{\prime \prime}. & {Conjugation:} & \beta & \sim & \alpha^{\pm 1}\, \beta\, \alpha^{\mp 1} & {\rm for\ all}\ \beta\in TPM_n\ \&\ \alpha\in B_n,\\
&&&&&\\
3^{\prime \prime}. & {Real-stabilization:} &  \alpha & \sim & \alpha\, \sigma_n^{\pm 1}, & {\rm for\ all}\ \alpha \in TPM_n,\\
&&&&&\\
4^{\prime \prime}. & {Pseudo-stabilization:} &  \alpha & \sim & \alpha\, p_n, & {\rm for\ all}\ \alpha \in TPM_n.\\
&&&&&\\
5^{\prime \prime}. & {t-stabilization:} &  \alpha & \sim & \alpha\, \eta_{i, j}, & {\rm for\ all}\ \alpha \in TPM_n\ {\rm such\ that}\\
\end{array}
\]

\noindent $s_{\alpha}(i)=j$, {\rm where} $s_{\alpha}$ {denotes the permutation associated to the braid obtained from} $\alpha$ {by forgetting its ties.}
\end{thm}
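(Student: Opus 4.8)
The plan is to mirror the proof of Theorem~\ref{markpl}, lifting everything to the tied singular setting where the algebraic Markov theorem (Theorem~\ref{marktsl}) is available, and then transporting the resulting moves back through the isomorphism $\mu$ of Theorem~\ref{is1}. The guiding principle is that the theory of tied pseudo links is the quotient of the theory of tied singular links by the pseudo-Reidemeister move $1$; hence the only phenomenon not already accounted for by $TSM_n$ is the birth or death of a kink carrying a single pre-crossing, and this is exactly what the pseudo-stabilization move $4''$ records.

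For the reverse (easy) direction I would verify that each of the five moves yields tie-isotopic closures. Moves $1''$, $3''$, $5''$ are the images under $\mu$ of the commuting, stabilization, and $t$-stabilization moves $2'$, $3'$, $4'$ of Theorem~\ref{marktsl}, each of which preserves closure equivalence in the tied singular theory; since $\mu$ is compatible with closures, the same holds after replacing each $\tau_i$ by $p_i$. Move $2''$ is honest conjugation by an element of $B_n$ and visibly does not alter the closure, while move $4''$ closes up to a kink carrying one pre-crossing, which is removed by a PR$1$ move in the pseudo theory (precisely the place where the pseudo theory departs from the singular one, in which the analogous singular kink cannot be undone).

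For the forward (main) direction, suppose $L$ and $L'$ are tie-isotopic tied pseudo links and, by Theorem~\ref{alexthmtpl}, choose tied pseudo braids $\beta,\beta'$ whose closures are $L,L'$. I would write the tie-isotopy as a finite sequence of tie-moves and classical/pseudo-Reidemeister moves, and separate the PR$1$ moves from all the remaining moves, the latter coinciding with those available in the tied singular theory. Along each maximal PR$1$-free subsequence I would pass to tied singular links by choosing preimages under the natural surjection $f$, apply Theorem~\ref{marktsl} to conclude that the corresponding tied singular braids differ by moves $1'$--$4'$, and then push forward by $\mu$ to obtain moves on tied pseudo braids, with $2'\mapsto 1''$, $3'\mapsto 3''$, $4'\mapsto 5''$. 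For each isolated PR$1$ move, the argument of Theorem~\ref{markpl} applies verbatim in the presence of transparent, elastic ties and shows that the two bordering tied pseudo braids differ by a single pseudo-stabilization $4''$. Concatenating these pieces expresses $\beta$ and $\beta'$ through moves $1''$--$5''$.

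The main obstacle is the handling of the tied singular conjugation move $1'$, which permits cyclic commutation with any element of $TM_n$ --- including the non-invertible ties $\eta_i$ --- whereas Theorem~\ref{marktpb} lists only genuine conjugation by $B_n$ in move $2''$. To bridge this gap I would invoke the mobility property of Theorem~\ref{mobtpb}, which slides every tie to the bottom of the braid, together with Equations~\ref{eqf1}, which govern how a braid generator passes a generalized tie; introducing or absorbing a tie at the relevant strand is then exactly the $t$-stabilization $5''$. In this way commuting past a tie is recovered as a composition of $2''$ and $5''$, and the remaining verification --- that the partition of the strands is preserved throughout --- is routine by Proposition~\ref{propaj1}.
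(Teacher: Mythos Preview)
Your proposal is correct and follows essentially the same route as the paper: lift to the tied singular setting via $\mu$, invoke Theorem~\ref{marktsl} on the PR1-free portions, and handle each PR1 move by a pseudo-stabilization $4''$ exactly as in \cite{BJW}. The one point you treat more carefully than the paper's main argument is the transport of move $1'$ (commutation with an arbitrary element of $TM_n$, in particular with a tie $\eta_i$) into moves $2''$ and $5''$; the paper glosses over this in the body of the proof and only indicates the mobility/partition argument you sketch in a subsequent remark offering an alternative proof via Theorem~\ref{mobtpb} and Proposition~\ref{propaj1}.
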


We now proceed with the proof of Theorem~\ref{marktpb}.

\begin{proof}
Obviously, if two tied pseudo braids differ by moves of Theorem~\ref{marktpb}, then their closures are equivalent. For the converse, let $L, L^{\prime}$ be two tied pseudo links and let $l, l^{\prime}$ be the corresponding tied singular links obtained from $L$ and $L^{\prime}$ respectively, by changing pre-crossings to singular crossings. From the braiding algorithm for tied singular braids, we may braid $l$ and $l^{\prime}$ to $\alpha$ and $\alpha^{\prime}$ respectively. By Theorem~\ref{is1}, we may assume that $\alpha\, =\, \mu(\beta)$ and $\alpha^{\prime}\, =\, \mu(\beta^{\prime})$, where $\mu$ is the isomorphism between $TSM_n$ and $TPM_n$. If now $L$ differs from $L^{\prime}$ by any isotopy move except from pseudo Reidemeister moves 1, then by Theorem~\ref{marktsl}, the tie singular braids $\alpha, \alpha^{\prime}$ differ by moves $1^{\prime}, 2^{\prime}, 3^{\prime}, 4^{\prime}$ and thus, $\beta, \beta^{\prime}$ differ by moves $1^{\prime \prime}, 2^{\prime \prime}, 3^{\prime \prime}$ and $5^{\prime \prime}$.

\smallbreak

If now $L, L^{\prime}$ differ by a pseudo Rreidemeister 1 move, we follow the proof of Theorem~4.2 in \cite{BJW} keeping in mind that the endpoints of the ties can freely move along strands. In that way, we may always reduce the complexity that ties may create in the diagrams. Without loss of generality, we place now $L$ and $L^{\prime}$ in $S^2$ in such a way that their diagrams are identical except for a region as shown in top part of Figure~\ref{alm1}. We now perform the braiding algorithm from \S~\ref{lmovepk} on all identical up-arcs of $L$ and $L^{\prime}$, leaving the region that the tied pseudo links differ at, in the end. Obviously, the tied pseudo braids $\beta$ and $\beta^{\prime}$ differ by a $4^{\prime \prime}$ move.

\begin{figure}[ht]
\begin{center}
\includegraphics[width=2.5in]{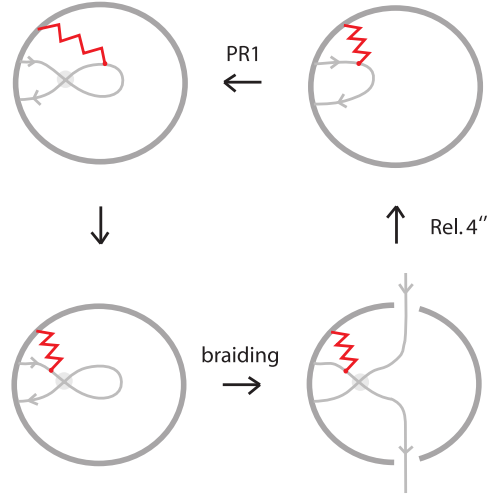}
\end{center}
\caption{RP1 and Rel. 4''.}
\label{alm1}
\end{figure}

\end{proof}

\begin{remark}\rm
An alternative proof of Theorem~\ref{marktpb} can be obtained by using Theorem~\ref{mobtpb} in order to separate the ties from the rest of the braiding generators of a tied pseudo braid and then follow the proof of Theorem~13 in \cite{D} by using generalized ties and properties that generalized ties satisfy. Namely, if $L$ and $L^{\prime}$ are two isotopic tied pseudo links and $\beta, \beta^{\prime}$ corresponding tied pseudo braids of theirs, then from Theorem~\ref{mobtpb} we have that $\beta=\gamma\, k$ and $\beta^{\prime}= \gamma^{\prime}\, k^{\prime}$, where $\gamma, \gamma^{\prime} \in PM_n$ and $k, k^{\prime} \in TPM_{n}^{\sim}$. Ignoring the ties in the tied pseudo links $L$ and $L^{\prime}$ results into two isotopic pseudo links. Thus, corresponding pseudo braids of theirs, $\alpha, \alpha^{\prime}$, are equivalent and one may obtain $\alpha^{\prime}$ from $\alpha$ via a sequence of moves in Theorem~\ref{markpl} or Theorem~\ref{lmarkpl}. Note that $\alpha$ and $\alpha^{\prime}$ are obtained from $\beta, \beta^{\prime}$ respectively, by ignoring the ties. Thus, $\gamma$ is equivalent to $\gamma^{\prime}$ by a sequence of Theorem~\ref{markpl} moves. From Proposition~\ref{propaj1}, it suffices to show now that $k$, $k^{\prime}$ define the same partition of components of the links. This is achieved in the same way as in \cite{AJ1} for tied links in $S^3$, as in \cite{F} for tied links in the Solid Torus and as in \cite{D} for tied links in various 3-manifolds.
\end{remark}

By similar argumentation as before, it follows that the moves $2^{\prime \prime}, 3^{\prime \prime}$ and $4^{\prime \prime}$ of Theorem~\ref{marktpb} can be replaced by the $L$-moves. This leads to the $L$-move analogue of Theorem~\ref{marktpb}:

\begin{thm}[{\bf Braid equivalence for tied pseudo braids via $L$-moves}] \label{marktpb2}
Two tied pseudo braids have equivalent closures if and only if one can  obtained from the other by a finite sequence of the following moves:
\[
\begin{array}{llcll}
{L-moves} &   & & & \\
&&&&\\
{Commuting:} & \alpha\, p_i & \sim & p_i\, \alpha, & {\rm for\ all}\ \alpha\, \in TPM_n,\\
&&&&\\
{t-stabilizations:} &  \alpha & \sim & \alpha\, \eta_{i, j}, & {\rm for\ all}\ \alpha \in TPM_n\ {\rm such\ that}\\
\end{array}
\]
\noindent $s_{\alpha}(i)=j$, {\rm where} $s_{\alpha}$ {denotes the permutation associated to the braid obtained from} $\alpha$ {by forgetting its ties.}
\end{thm}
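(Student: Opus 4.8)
<br>

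The plan is to derive Theorem~\ref{marktpb2} directly from the already-established Markov-type Theorem~\ref{marktpb} by showing that each of the moves $2^{\prime\prime}$ (conjugation), $3^{\prime\prime}$ (real-stabilization) and $4^{\prime\prime}$ (pseudo-stabilization) is subsumed by the $L$-moves, while moves $1^{\prime\prime}$ (commuting by a pre-crossing) and $5^{\prime\prime}$ ($t$-stabilization) must be retained verbatim, since $L$-moves cannot realize them. This is precisely the tied-pseudo analogue of the passage from Theorem~\ref{markpl} to Theorem~\ref{lmarkpl}, and of the passage from Theorem~\ref{marktsl} to Theorem~\ref{lmarktsl}, so I would lean heavily on those two reductions and treat the ties exactly as they are treated in the singular setting.

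First I would invoke the classical fact, established in \cite{LR1} and reused throughout this paper, that real conjugation $\beta\sim\alpha^{\pm1}\beta\alpha^{\mp1}$ for $\alpha\in B_n$ is realized by a finite sequence of $L$-moves, and that a real stabilization $\alpha\sim\alpha\,\sigma_n^{\pm1}$ is literally a special case of an $L$-move (the case where the new strands cross nothing, recall Definition~\ref{lmdefn} and Figure~\ref{lm}). This immediately eliminates moves $2^{\prime\prime}$ and $3^{\prime\prime}$ in favor of $L$-moves. For move $4^{\prime\prime}$, I would use the observation recorded in \S~\ref{lmpb}: an $L$-move on a pseudo braid that introduces an inbox pre-crossing (Figure~\ref{plm}) has pseudo-stabilization $\alpha\sim\alpha\,p_n$ as a special case, exactly because the pseudo-Reidemeister~1 move (PR1) is permitted in the theory. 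This is the feature that distinguishes the pseudo setting from the singular one, where the inbox-singular crossing is forbidden; so move $4^{\prime\prime}$ is absorbed into the $L$-moves as well.

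The key point to emphasize is that all three of these reductions interact trivially with the ties. Since an $L$-move is defined on a tied pseudo braid exactly as in the classical case (cutting an arc and pulling the two new strands entirely over or under the rest of the braid), and since the ties are transparent and elastic and may be slid freely along strands (Eq.~\ref{eqf1}, Theorem~\ref{mobtpb}), performing an $L$-move never obstructs, nor is obstructed by, the presence of a tie: one may first slide any tie clear of the cut arc, perform the $L$-move, and then restore the tie. Thus the arguments that realize $2^{\prime\prime}$, $3^{\prime\prime}$, $4^{\prime\prime}$ by $L$-moves in the untied pseudo setting carry over verbatim to $TPM_n$. What remains is precisely the commuting move $1^{\prime\prime}$, which the $L$-moves genuinely cannot achieve (a pre-crossing cannot be slid past a braiding generator by any contraction, cf. Figure~\ref{pcsing}), and the $t$-stabilization $5^{\prime\prime}$, which concerns only the partition data and is likewise independent of the $L$-move mechanism. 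These two are kept, yielding the three-item list of the theorem.

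I do not expect a serious obstacle, since the statement is a bookkeeping corollary of Theorem~\ref{marktpb} rather than a new result; the only point requiring care is the claim that the $L$-move realizing pseudo-stabilization does not inadvertently alter the partition class of the strand on which it acts. To handle this I would note that a pseudo-stabilization introduces a single new strand carrying one pre-crossing and no tie, so it leaves the partition of the components unchanged, and the equivalence of partitions is then controlled separately by Proposition~\ref{propaj1} and the retained move $5^{\prime\prime}$. With that verification in place, Theorem~\ref{marktpb} together with these three reductions gives the forward direction; the reverse direction (that braids differing by $L$-moves, commuting, and $t$-stabilization have equivalent closures) is immediate, since each such move preserves the closure up to tie isotopy.
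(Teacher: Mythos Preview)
Your proposal is correct and follows exactly the paper's approach: the paper's proof consists of the single sentence ``By similar argumentation as before, it follows that the moves $2^{\prime\prime}, 3^{\prime\prime}$ and $4^{\prime\prime}$ of Theorem~\ref{marktpb} can be replaced by the $L$-moves,'' and you have spelled out precisely that argumentation (drawing on \S\ref{lmpb} and the passage from Theorem~\ref{marktsl} to Theorem~\ref{lmarktsl}). Your additional care about ties not obstructing $L$-moves and about the partition data being unaffected by pseudo-stabilization is more detail than the paper provides, but entirely in its spirit.
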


\section{Pseudo knotoids \& Pseudo braidoids}\label{pknoid}

In this section we introduce and study the theory of {\it pseudo knotoids} and {\it pseudo braidoids}. We present a braiding algorithm for pseudo knotoids and we also introduce the notion of $L$-moves for pseudo braidoids following \cite{GL1, GL2}, which allow us to obtain the analogue of the Markov theorem for pseudo braidoids.

\subsection{Pseudo knotoids}\label{pkoidnew}

\begin{defn}\rm
A {\it pseudo knotoid diagram} in a surface $\Sigma$ is a knotoid diagram in $\Sigma$ where some crossing information may be missing. The undetermined crossings are again called {\it pre-crossings} (for an illustration see the left part of Figure~\ref{pcl}).
\end{defn}

A pseudo knotoid in $\Sigma$ is then an equivalence class of pseudo knotoid diagrams in $\Sigma$ up to the equivalence relation induced by moves illustrated in Figure~\ref{reid} and planar isotopy. As illustrated in Figure~\ref{pfm}, there is an extra forbidden move in the case of pseudo knotoids, similar to the first forbidden move in the case of knotoids, tha we call {\it pseudo forbidden move}.

\begin{figure}[ht]
\begin{center}
\includegraphics[width=2in]{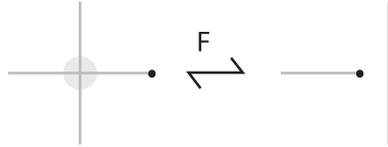}
\end{center}
\caption{The pseudo forbidden move.}
\label{pfm}
\end{figure}

Note that there is a situation where pseudo forbidden moves, seemingly occur as illustated in Figure~\ref{πffm}. We shall call these moves
{\it fake pseudo forbidden moves}.

\begin{figure}[ht]
\begin{center}
\includegraphics[width=2.2in]{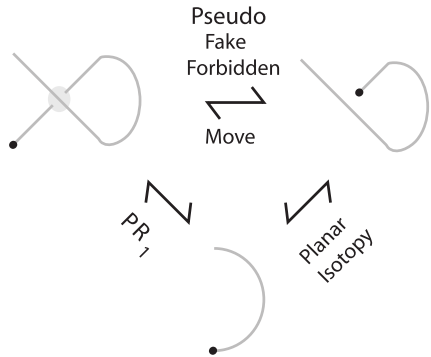}
\end{center}
\caption{The fake pseudo forbidden move.}
\label{πffm}
\end{figure}

\begin{remark}\rm
In \cite{MLK} the theory of {\it singular knotoids} is introduced. In particular, singular knotoid diagrams are defined as knotoid diagrams in $\Sigma$ that contain a finite number of self-intersections. Pseudo knotoids constitute the corresponding theory of singular knotoids, in a similar manner, as singular knot theory is related to pseudo knot theory. 
\end{remark}

In analogy to the definition of singular closure in \cite{MLK} (under certain conditions) one can also define the {\it pseudo closure} of knotoids as follows: 
 
\begin{defn}\rm
We call {\it pseudo closure} for knotoids, a mapping from the set of knotoid diagrams in a surface $\Sigma$ to the set of pseudo knot diagrams in $\Sigma$, where the endpoints of a knotoid diagram are connected with an embedded arc in $\Sigma$ and a pre-crossing is created every time the connection arc crosses a strand of the diagram.
\end{defn}

\begin{figure}[ht]
\begin{center}
\includegraphics[width=4.3in]{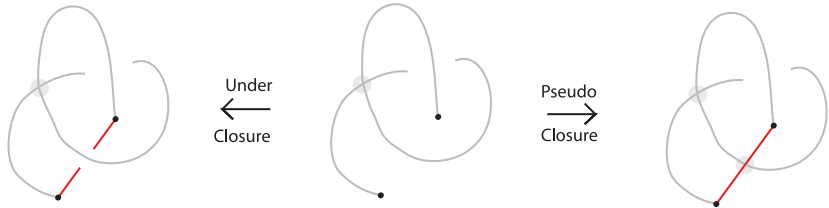}
\end{center}
\caption{The under closure and the pseudo closure of a pseudo knotoid.}
\label{pcl}
\end{figure}

\subsection{Pseudo braidoids}\label{pbrd}

Similarly to {\it braidoids}, one may define {\it pseudo braidoids}, the counterpart of pseudo knotoids, to be (labeled) braidoids with some crossing information missing. In order to obtain pseudo (labeled) braidoid isotopy, we allow the analogue of the vertical move on pseudo braidoids, that we call {\it pseudo vertical  move}, as illustrated in Figure~\ref{biso3}.

\begin{figure}[ht]
\begin{center}
\includegraphics[width=1.35in]{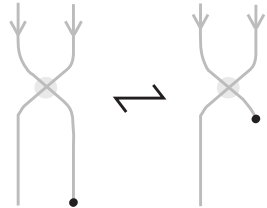}
\end{center}
\caption{The pseudo vertical move.}
\label{biso3}
\end{figure}

In Figure~\ref{fmb2} the forbidden move on pseudo braidoids that we call pseudo forbidden move is illustrated.

\begin{figure}[ht]
\begin{center}
\includegraphics[width=1.8in]{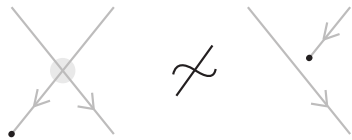}
\end{center}
\caption{The pseudo forbidden move on pseudo braidoids.}
\label{fmb2}
\end{figure}

\begin{defn}\label{pbroidiso}\rm
Two pseudo braidoid diagrams are said to be isotopic if one can be obtained from the other by a finite sequence of the moves in Definition~\ref{broidiso} together with the pseudo vertical move. Note again that only restricted swing moves are allowed in isotopy of pseudo braidoid diagrams. Moreover, an isotopy class of pseudo braidoid diagrams is called a {\it pseudo braidoid} and finally, a {\it labeled pseudo braidoid diagram} is a pseudo braidoid diagram with a label over or under assigned to each pair of corresponding ends.
\end{defn}

We now define the closure operation of pseudo braidoids in the same way as in the case of classical braidoids, and we have the following result:

\begin{thm}[{\bf The analogue of the Alexander theorem for pseudo knotoids}] \label{alexpknoid}
Every pseudo (multi)-knotoid can be obtained by closing a pseudo braidoid.
\end{thm}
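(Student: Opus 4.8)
The plan is to extend the braidoiding algorithm of \cite{GL1, GL2} for knotoids to the pseudo setting, mimicking the strategy already used in \S~\ref{alse} for pseudo knots. Given a pseudo (multi)-knotoid diagram $K$ in $\Sigma$, I would first orient it from leg to head and isotope it so that the pre-crossings are positioned conveniently with respect to the height function. As in the proof of Theorem~\ref{newprpkalex}, the essential preliminary step is to rotate every pre-crossing that contains at least one up-arc (recall Figure~\ref{tw}), so that after this local isotopy all pre-crossings involve only down-arcs. Since a pre-crossing carries no over/under information, this rotation is a legitimate isotopy in the pseudo theory and, crucially, it does not create any forbidden or pseudo forbidden move because it is performed away from the endpoints.

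Next I would apply the braidoiding algorithm of \cite{GL1, GL2} to the resulting diagram while \emph{ignoring} the pre-crossings, exactly as the classical braiding moves of \S~\ref{alse} are applied ignoring pre-crossings. Concretely, I would subdivide the up-arcs into arcs each containing crossings of a single type, label each up-arc with ``o'' or ``u'' according to the crossings it contains, and perform the corresponding $o$- or $u$-braidoiding moves (cut-and-paste moves pulling the up-arcs to the top or bottom, together with the requisite label assignments on the newly created pairs of corresponding ends). Because all pre-crossings now consist solely of down-arcs, the braidoiding moves leave them untouched, so they simply descend into the resulting descending-strand diagram as elementary pre-crossings $p_i$. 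The output is therefore a labeled pseudo braidoid diagram, and by the construction of the braidoiding moves its closure is isotopic, as a pseudo (multi)-knotoid, to $K$.

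The main obstacle, and the point requiring the most care, is verifying that the braidoiding procedure respects the \emph{pseudo forbidden move} (Figure~\ref{pfm}, Figure~\ref{fmb2}) in addition to the ordinary forbidden moves already handled in \cite{GL1, GL2}. I would argue that the free strands adjacent to the endpoints are never pulled over or under a transversal strand during the algorithm -- only up-arcs are replaced, and the endpoint arcs are treated by the same endpoint-preserving conventions as in the classical knotoid case -- so neither a forbidden move nor a pseudo forbidden move is ever invoked. One must also check that the rotation step of the first paragraph does not inadvertently force a pre-crossing past an endpoint; this is where the \emph{fake pseudo forbidden moves} of Figure~\ref{πffm} are relevant, as they certify that the seemingly problematic local configurations are in fact permissible isotopies. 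Once these two checks are in place, the algorithm terminates after finitely many braidoiding moves (one per up-arc, as in the classical count), and the closure of the constructed labeled pseudo braidoid recovers $K$, completing the proof.
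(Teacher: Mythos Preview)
Your proposal is correct and follows essentially the same approach as the paper: first rotate all pre-crossings so they involve only down-arcs (as in \S~\ref{alse}), then run the braidoiding algorithm of \cite{GL1, GL2} on the resulting diagram, which leaves the pre-crossings untouched. The paper's own proof says exactly this in two sentences; your version is more detailed---in particular your explicit discussion of the (fake) pseudo forbidden moves near the endpoints---but the underlying argument is the same.
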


\begin{proof}
The proof is similar to the braiding algorithm presented in \cite{LR1, Gu, GL2}. The only difference lies in the fact that before we apply the algorithm, we need to isotope the pseudo knotoid so that all pre-crossings are oriented downward as done in the case of pseudo knots (recall the algorithm in \S~\ref{alse}). In this way we guarantee that the braidoiding process will not affect the pre-crossings and the result follows.
\end{proof}

\begin{remark}\rm
The reader is referred to \cite{Gu, GL2} for details regarding the braidoiding algorithms for knotoids and how all steps of the braiding algorithm in \cite{LR1} are adopted in the case of knotoids.
\end{remark}

Before we define $L$-moves for labeled pseudo braidoid diagrams, we recall the definition of the $L$-moves on labeled braidoid diagrams from \cite{GL2}: An $L$-move on a braidoid diagram is defined in the same way as in the classical case and the only difference lies in the fact that after applying an $L$-move, a new pair of corresponding strands appear, and in order for the labeled braidoid to remain labeled, we assign to the new pair of corresponding strands the label ``o'' or ``u'' according to the type of $L$-move that was applied.

\smallbreak

Similarly, we have the following:

\begin{defn}\rm
An {\it $L$-move} on a labeled pseudo braidoid $\beta$, consists in cutting an arc of $\beta$ open and pulling the upper cutpoint downward and the lower upward, so as to create a new pair of braid strands with corresponding endpoints (on the vertical line of the cutpoint), and such that both strands cross the rest of the braid only with real crossings (all over in the case of an $L_o$-move or all under in the case of an $L_u$-move) and we assign to the new pair of corresponding strands the label ``o'' or ``u'' according to the type of $L$-move that was applied. We shall call $L$-moves on labeled pseudo braidoids, {\it pseudo} $L$-{\it moves}. 
\end{defn}

In \cite{GL2}, the authors also define the {\it fake forbidden moves} on a labeled braidoid diagram $B$, as forbidden moves on $B$ which upon closure induce a sequence of fake forbidden moves on the resulting (multi-)knotoid diagram. Moreover, a {\it fake swing move} is defined as a swing move which is not restricted, in the sense that the endpoint surpasses the vertical line of a pair of corresponding ends but in the closure it gives rise to a sequence of swing and fake forbidden moves on the resulting (multi-)knotoid diagram. See Figure~\ref{fm2} for an example of a fake swing move and a fake forbidden move on a labeled braidoid diagram.

\begin{figure}[ht]
\begin{center}
\includegraphics[width=3.7in]{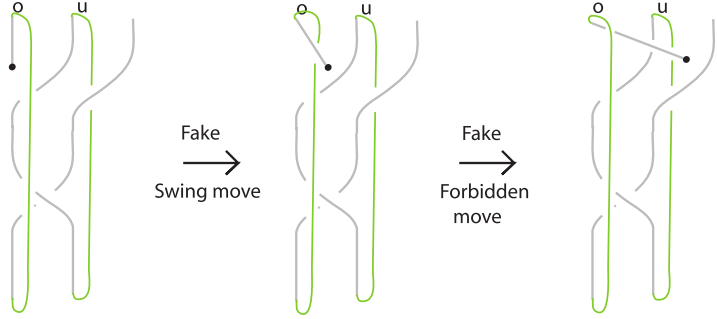}
\end{center}
\caption{A fake swing move and a fake forbidden move.}
\label{fm2}
\end{figure}

As proved in Lemma~8 \cite{GL2}, a fake forbidden move can be generated by a sequence of $L$-moves, together with planar isotopy and fake swing moves. Thus, they define $L${\it -equivalence} on labeled braidoid diagrams to be the equivalence relation on labeled braidoid diagrams generated by the $L$-moves together with labeled braidoid isotopy moves and fake swing moves. $L$-equivalence turns out to be especially useful for formulating a braidoid equivalence since there is no algebraic structure for braidoids. Indeed, we have the following result \cite{GL2}:

\begin{thm}[{\bf An analogue of the Markov theorem for braidoids}]\label{brisomark}
The closures of two labeled braidoid diagrams are isotopic (multi)-knotoids in $\Sigma$ if and only if the labeled braidoid diagrams are related to each other via $L$-equivalence moves.
\end{thm}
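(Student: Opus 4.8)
The plan is to adapt the $L$-move methodology of \cite{LR1}, as already carried over to the knotoid setting in \cite{GL2}, while taking special care of the endpoints and of the forbidden moves. I would organize the argument around the two implications. For the sufficiency direction I would verify that each generator of $L$-equivalence closes to an isotopy of (multi)-knotoids. An $L$-move, upon closure, replaces an arc by a pair of corresponding strands that run entirely over (resp. under) the rest of the diagram and are then joined by the closure segment; tracing the closure shows that the new closed diagram differs from the original by Reidemeister II moves together with planar isotopy, all of which respect the downward orientation and create no forbidden configurations. The labeled braidoid isotopy moves (the $\Delta$-moves, vertical moves and the admissible swing moves of Definition~\ref{broidiso}) close to planar isotopies and permitted Reidemeister moves, while a fake swing move closes, by its very definition, to a sequence of swing and fake forbidden moves on the resulting knotoid, all of which are legitimate. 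Hence $L$-equivalent labeled braidoids have isotopic closures.

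For the necessity direction I would proceed in three steps. First, I would establish that the braidoiding algorithm behind Theorem~\ref{alexkn} is well-defined up to $L$-equivalence, i.e. that the $L$-equivalence class of the output is independent of the choices made during braidoiding: the base point, the subdivision of the diagram into up- and down-arcs, and the label assigned to a crossing-free up-arc. The independence from the last choice is the crucial point, and it is obtained by showing that changing an ``o'' into a ``u'' on a crossing-free strand is realized by a single $L$-move followed by a braidoid isotopy. Second, given two isotopic (multi)-knotoid diagrams $K$ and $K'$, I would realize the isotopy between them as a finite sequence of elementary moves, namely planar isotopies and the moves R1, R2, R3, performed away from the forbidden configurations. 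Third, I would show by a purely local analysis that each such elementary move on the knotoid, when passed through the braidoiding algorithm, alters the resulting labeled braidoid only by $L$-moves, braidoid isotopy and fake swing moves: each elementary move touches only finitely many up-arcs, and the comparison of the braidoided pictures before and after the move is captured by inserting or deleting strands via $L$-moves.

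The step I expect to be the main obstacle is the interaction between the knotoid isotopy and the endpoints, that is, the treatment of the \emph{fake forbidden moves}. In the classical braid case of \cite{LR1} there are no endpoints, so every local Reidemeister move braidoids cleanly; here, however, an isotopy of knotoids may push a strand past an endpoint in a way that is inadmissible as a genuine forbidden move but admissible as a fake forbidden move (recall Figures~\ref{ffm} and \ref{fm2}), and one must show that such configurations are themselves generated by $L$-equivalence. This is precisely the content of Lemma~8 \cite{GL2}, which asserts that a fake forbidden move can be produced by a sequence of $L$-moves together with planar isotopy and fake swing moves; I would invoke this lemma exactly at the point where the knotoid isotopy forces a strand across an endpoint. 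Combining the well-definedness of the braidoiding algorithm up to $L$-equivalence with the move-by-move translation of the knotoid isotopy, and using Lemma~8 \cite{GL2} to absorb the fake forbidden moves, yields that $K$ and $K'$ braidoid to $L$-equivalent labeled braidoids, which completes the necessity direction and hence the theorem.
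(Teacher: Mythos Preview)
The paper does not prove Theorem~\ref{brisomark} at all: it is quoted verbatim as a result of \cite{GL2} (see the sentence ``Indeed, we have the following result \cite{GL2}:'' immediately preceding the statement), and no argument is given in the paper. So there is no ``paper's own proof'' against which to compare your proposal.

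That said, your outline is a faithful sketch of the strategy actually used in \cite{GL2}: verify that $L$-equivalence moves close to knotoid isotopies for one direction, and for the converse show that the braidoiding algorithm is well-defined up to $L$-equivalence and then track each Reidemeister move and planar isotopy through the algorithm, invoking Lemma~8 of \cite{GL2} to handle the fake forbidden moves arising near the endpoints. Your identification of the endpoint/fake-forbidden-move interaction as the delicate point is correct, and the role you assign to Lemma~8 is exactly the role it plays in \cite{GL2}. If you were asked to supply a proof here, what you wrote would be an appropriate summary of the \cite{GL2} argument; but for the purposes of this paper the theorem is simply cited.
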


Following the same ideas, we also define the {\it fake pseudo forbidden moves} on a labeled pseudo braidoid diagram $B$, as pseudo forbidden moves on $B$ which upon closure induce a sequence of fake pseudo forbidden moves on the resulting pseudo (multi-)knotoid diagram. Moreover, a {\it fake pseudo swing move} is defined as a (pseudo) swing move which is not restricted, in the sense that the endpoint surpasses the vertical line of a pair of corresponding ends but in the closure it gives rise to a sequence of swing and fake pseudo forbidden moves on the resulting pseudo (multi-)knotoid diagram. See Figure~\ref{pfm2} for an example of a fake pseudo swing move and a fake pseudo forbidden move on a labeled pseudo braidoid diagram.

\begin{figure}[ht]
\begin{center}
\includegraphics[width=3.7in]{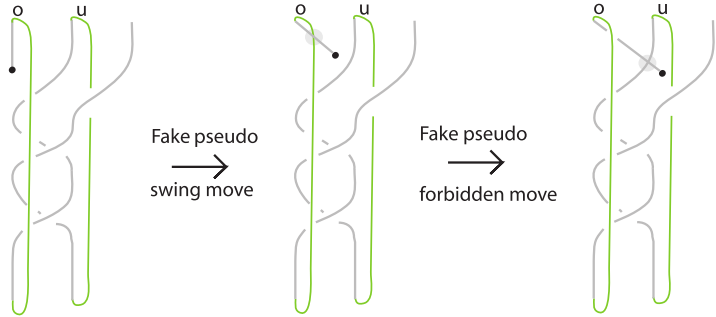}
\end{center}
\caption{A fake pseudo swing move and a fake pseudo forbidden move.}
\label{pfm2}
\end{figure}

We now define pseudo $L$-equivalence on pseudo braidoid diagrams as follows:

\begin{defn}\rm
The pseudo $L$-moves together with labeled pseudo braidoid isotopy moves and fake swing moves generate an equivalence relation on labeled pseudo braidoid diagrams that is called {\it pseudo} $L${\it -equivalence}.
\end{defn}

We are now in position to state and prove a theorem of pseudo braidoid equivalence:

\begin{thm}[{\bf An analogue of the Markov theorem for pseudo braidoids}]\label{breqpbroid}
The closures of two labeled pseudo braidoid diagrams are isotopic pseudo (multi)-knotoids in $\Sigma$ if and only if the labeled pseudo braidoid diagrams are related to each other via pseudo $L$-equivalence moves.
\end{thm}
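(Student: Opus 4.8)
The plan is to mirror the proof of the braidoid Markov theorem (Theorem~\ref{brisomark}) from \cite{GL2}, adapting every step to accommodate pre-crossings by means of the downward-rotation device of \S~\ref{alse}. The ``if'' direction is direct: one checks that each generator of pseudo $L$-equivalence---a pseudo $L$-move, a labeled pseudo braidoid isotopy move, or a fake swing move---yields, upon closure, isotopic pseudo (multi-)knotoids in $\Sigma$. All of these checks are identical to the classical braidoid case except for the pseudo $L$-move that introduces an \emph{inbox} pre-crossing (recall Figure~\ref{plm}): there the closures of the two braidoids differ precisely by a pseudo-Reidemeister move PR1, so they represent the same pseudo knotoid. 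This is the braidoid incarnation of the pseudo-stabilization appearing in Theorem~\ref{marktpb}.

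For the converse, I would start from the braidoiding algorithm underlying Theorem~\ref{alexpknoid} and first prove that its output is well defined up to pseudo $L$-equivalence. The algorithm depends on several choices: the base point, the subdivision of each up-arc into sub-arcs carrying crossings of a single type, the labels assigned to crossing-free up-arcs, and the preliminary rotation of every pre-crossing that contains an up-arc so that it becomes downward oriented (Figure~\ref{tw}). For each such choice I would show, as in the corresponding lemmas of \cite{GL2}, that two admissible outputs differ only by pseudo $L$-moves, labeled pseudo braidoid isotopy and fake swing moves. The key simplification is that, once all pre-crossings point downward, they are untouched by the braiding moves and behave as rigid local features, so the analysis of the classical part of the diagram proceeds verbatim as in \cite{GL2}.

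Next I would verify that every elementary isotopy move relating two pseudo knotoid diagrams lifts to a pseudo $L$-equivalence of their braidoided forms. The planar isotopies, the moves R1, R2, R3, the vertical and swing moves, and the (fake) forbidden moves are handled exactly as in \cite{GL2}; in particular one needs the pseudo analogue of Lemma~8 \cite{GL2}, realizing a fake pseudo forbidden move (Figure~\ref{pfm2}) by a sequence of pseudo $L$-moves, fake pseudo swing moves and isotopy. After the preliminary isotopy the moves PR2 and PR3 involve only downward-oriented pre-crossings and therefore reduce to local $L$-moves in precisely the same way as R2 and R3. The remaining move PR1, which creates or absorbs a single pre-crossing, is realized at the braidoid level by a pseudo $L$-move introducing an inbox pre-crossing, closing the argument.

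The main obstacle will be the interaction between pre-crossings, the endpoints (leg and head), and the forbidden moves. Because a pre-crossing adjacent to an endpoint may not be dragged across it---this is exactly the pseudo forbidden move of Figures~\ref{pfm} and \ref{fmb2}---both the downward-rotation step and the braidoiding near the endpoints must be organized so that no forbidden configuration is ever produced, and the pseudo analogue of Lemma~8 \cite{GL2} must be established while respecting this constraint. Controlling this local behaviour near the endpoints is where the genuine work lies; once it is in place, the rest of the argument follows the classical braidoid template of \cite{GL2} together with the pre-crossing bookkeeping of \S~\ref{alse}.
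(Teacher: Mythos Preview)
Your proposal is correct and follows essentially the same route as the paper's own proof: both reduce to the braidoid Markov theorem of \cite{GL2} and single out exactly the two new ingredients to verify, namely that a PR1 move is realized at the braidoid level by a pseudo $L$-move introducing an inbox pre-crossing, and that a fake pseudo forbidden move is generated by pseudo $L$-moves together with swing moves (the pseudo analogue of Lemma~8 in \cite{GL2}). Your write-up is more explicit than the paper's about well-definedness of the algorithm and the handling of PR2, PR3 and the endpoint issues, but the underlying strategy is identical.
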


\begin{proof}
The proof is similar to the proof of Thereom~3 in \cite{GL2} (see Theorem~\ref{brisomark} in this paper). We only illustrate here how a pseudo Reidemeister 1 move on a pseudo knotoid can be translated to pseudo $L$-equivalence and how a pseudo fake forbidden move on a (labeled) braidoid diagram is generated by (pseudo) $L$-moves, planar isotopy moves and pseudo swing moves. The rest of the cases are similar to those in \cite{GL2}.

\smallbreak

Figure~\ref{pr1markov} illustrates how a PR1 move is transformed to a pseudo $L$-move after ``braidoiding'' the initial pseudo knotoid, using a similar algorithm to that described in \S~\ref{alse} (for more details the reader is referred to \cite{GL2}). Note that the PR1 move in Figure~\ref{pr1markov} is assumed to take place away from the endpoints of the pseudo knotoid.

\begin{figure}[ht]
\begin{center}
\includegraphics[width=4in]{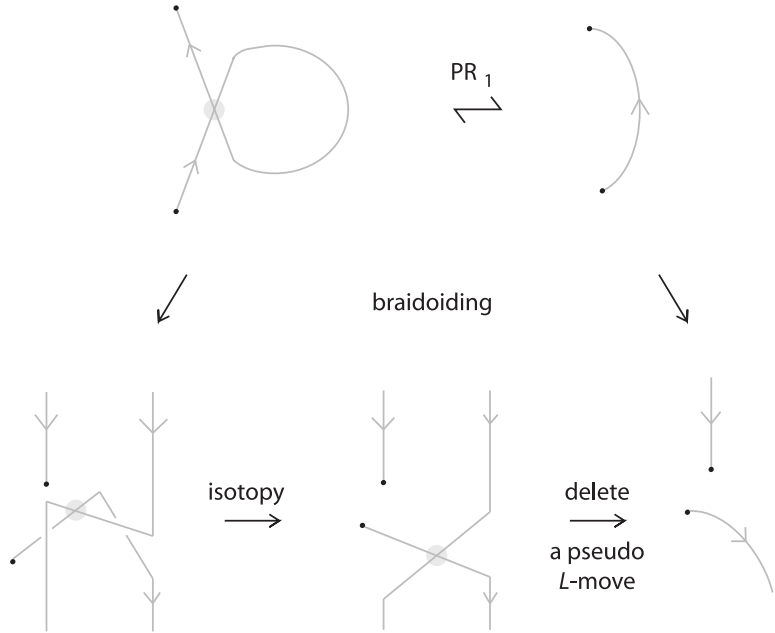}
\end{center}
\caption{A PR1 move on the level of pseudo braidoids.}
\label{pr1markov}
\end{figure}

Moreover, Figure~\ref{pffmmar} illustrates how a pseudo fake forbidden move on a (labeled) pseudo braidoid diagram is shown to be generated by (pseudo) $L$-moves and a swing move.

\begin{figure}[ht]
\begin{center}
\includegraphics[width=4in]{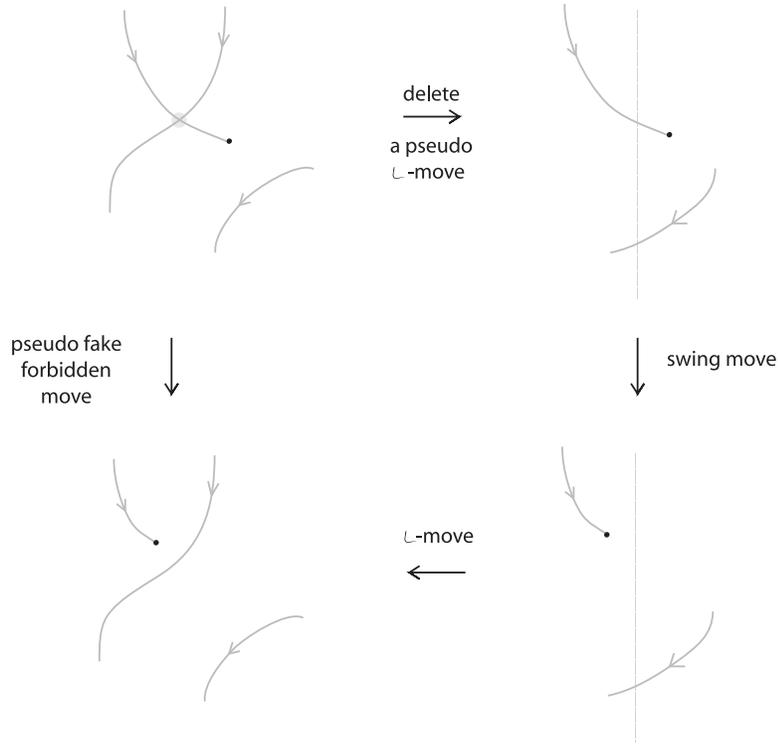}
\end{center}
\caption{A pseudo fake forbidden move on the level of pseudo braidoids.}
\label{pffmmar}
\end{figure}

\end{proof}

\section{Further Research}\label{fres}

Other families of interesting knotted objects that are worth studying, are the {\it tied multi-knotoids/linkoids} \& the {\it tied pseudo multi-knotoids/linkoids} defined as follows:

\begin{defn}\rm \label{furtherr}
A {\it tied multi-knotoid} is a multi-knotoid equipped with ties and a {\it tied linkoid} is a linkoid equipped with ties. Moreover, a {\it tied pseudo multi-knotoid/linkoid} is a pseudo multi-knotoid/linkoid equipped with ties. For an illustration see Figure~\ref{tpk}.
\end{defn}

\begin{figure}[ht]
\begin{center}
\includegraphics[width=2.2in]{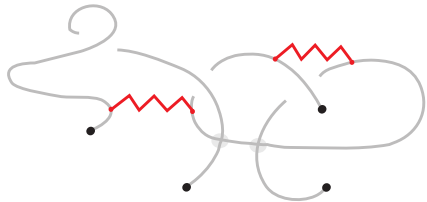}
\end{center}
\caption{A tied pseudo linkoid.}
\label{tpk}
\end{figure}

As in the classical case of tied links, the sets of ties in these knotted objects define a partition of the set of components. Thus, isotopy on the level of each knotted object introduced in Definition~\ref{furtherr} is translated by appropriate isotopy (ignoring the ties each time), and then taking into consideration that the set of ties in the knotted objects define the same partition of the set of components.

\smallbreak

In a sequel paper we shall study these knotted objects by defining the {\it tied braidoids} and the {\it tied pseudo braidoids} and work toward a braiding algorithm for these objects and also toward an algebraic statement of braidoid equivalence. Note that this is a challenging task even for the simple case of braidoids.

\section{Conclusions}

In this paper we introduce and study tied pseudo links via the tied pseudo braid monoid that is related to the tied singular braid monoid. Moreover, we introduce and study pseudo knotoids and pseudo braidoids. We believe that tied pseudo links and pseudo knotoids can be used to model biological objects related to DNA and our results set the bases for the construction of tied pseudo link invariants in the sense of \cite{Jo}. This method has been successfully applied in the case of tied links in $S^3$ in \cite{AJ4}, for tied links in the Solid Torus in \cite{F} and in \cite{D}, braiding algorithms and braiding equivalence for tied links in various 3-manifolds are presented and work toward the study of tied links in knot complements and other c.c.o. 3-manifolds is currently being done \cite{D4}. Finally, it is worth mentioning that this braid approach in defining invariants for knots in various 3-manifolds has been implemented for the construction of HOMFLYPT type invariants for knots in the Solid Torus in \cite{La1, DL2} and work toward the construction of such invariants for knots in lens spaces $L(p,1)$ has been done in \cite{DL3, DL4, DLP, D3, DL1}. For the case of Kauffman bracket type invariants via braids the reader is referred to \cite{D1} for knots in the Solid Torus and in \cite{D2} for knots in the genus 2 handlebody. In a sequel paper we shall adopt this braid approach for the study of other knot theories.

\end{document}